\documentclass[12pt,a4paper,final]{article}
\usepackage{latexsym,amsthm,amssymb,amscd,amsmath, graphics,amsfonts}
\usepackage{graphicx}
\usepackage{tocbibind}
\usepackage{makeidx}
\usepackage{array}
\makeindex
\theoremstyle{definition}
\newtheorem{definition}{Definition}[section]
\theoremstyle{theorem}
\newtheorem{theorem}[definition]{Theorem}
\newtheorem{lemma}[definition]{Lemma}
\newtheorem{proposition}[definition]{Proposition}
\newtheorem{corollary}[definition]{Corollary}
\newtheorem{remark}[definition]{Remark}
\newtheorem{conjecture}[definition]{Conjecture}
\theoremstyle{definition}
\newcommand{\subject}[1]{\begin{flushleft}\textbf{Mathematics  Subject Classification}: #1\end{flushleft}}
\newcommand{\keyword}[1]{\par\noindent \textbf{Keywords:} #1 }
\newcommand{\university}[1]{\\[3mm]{\small #1}}
\title{Nearly K\"ahler submanifolds of a space form}\author{Nikrooz Heidari\footnote{nikrooz.heidari@modars.ac.ir}
\university{Tarbait Modares University}\and Abbas
Heydari\footnote{aheydari@modares.ac.ir}\university{Tarbait Modares
University}}\date{\today}
\begin{document}
\maketitle
\begin{abstract}
In this article we study isometric immersions of nearly K\"ahler manifolds into a space form (specially Euclidean space) and
 show that every nearly K\"ahler submanifold of a space form has an
 umbilic foliation whose leafs are 6-dimensional nearly K\"ahler manifolds. Moreover using
 this foliation we show that there is no non-homogeneous 6-dimensional nearly K\"ahler submanifold of a space form.
 We prove some results towards a classification of nearly K\"ahler hypersurfaces in standard space forms.\\
\keyword{Nearly K\"ahler manifold, Isometric immersion, Totally
umbilic foliation} \subject{53B35,53C55}
\end{abstract}
\section{Introduction}
Nearly K\"ahler manifolds are one of the sixteen classes of almost
Hermitian manifolds in the Gray-Hervella classification
\cite{Her}.When Gray was working in weak holonomy he faced these
manifolds \cite{Gray1}, whose Riemannian curvature operators satisfy
certain identities. These identities are only slightly more
complicated than and resembling the corresponding formula for
Riemannian curvature operator of K\"ahler manifolds. Gray called
them nearly K\"ahler manifolds and he was able to show that many
results on the topology and geometry of K\"ahler manifolds
generalize to nearly K\"ahler Manifolds and discovered certain new
topological and geometric properties of these
manifolds\cite{Gray2,Gray4}.

In 2002 Nagy\cite{Nag1,Nag2} described nearly K\"ahler manifolds as
almost Hermitian manifolds whose canonical Hermitian connection has
parallel and totally skew-symmetric torsion and showed that any
complete strict nearly K\"ahler manifold is finitely covered by a
product of homogeneous 3-symmetric manifolds, twistor spaces over
quaternionic K\"ahler manifolds
with their canonical nearly K\"ahler structure and 6-dimensional strict nearly K\"ahler manifolds.

6-dimensional nearly K\"ahler manifolds appearing in Nagy
decomposition are Einstein of positive scalar curvature. In this
dimension such a structure is characterized by the existence of some
(at least locally) real Killing spinor\cite{Gru}. Combining this
with the fact that the first Chern class vanishes one could observe
that non-K\"ahler nearly K\"ahler 6-dimensional manifolds solve
most of the type II string equations\cite{Fri}. Other source of
interest for nearly K\"ahler structure in six dimension
is provided by their relation with geometries with torsion, $G_{2}$-holonomy and super-symmetric models.\\
Up to now, the only known examples of complete, 6-dimensional,
strict nearly K\"ahler manifolds are the 6-dimensional 3-symmetric
spaces endowed with their natural nearly K\"ahler structure, namely
\begin{align*}
S^{6}&=\frac{G_2}{Su(3)},\qquad S^{3}\times S^{3}=\frac{Su(2)\times
Su(2)}{<1>},\\
\qquad \mathbb{C}P^{3}&=\frac{Sp(2)}
{Su(2)\cdot U(1)},\qquad
\mathbb{F}^{3}=\frac{Su(3)}{U(1)\times U(1)}
\end{align*}
Butruile\cite{But} showed that these are homogeneous, complete
6-dimensional nearly K\"ahler manifolds. The question naturally
arises that {\it are there any non-homogeneous complete nearly K\"ahler
manifolds in six dimension?} We have the following conjecture by
Butrulle.
\begin{conjecture}
Every compact nearly K\"ahler manifold is a 3-symmetric space.
\end{conjecture}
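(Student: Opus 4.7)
The plan is to reduce the conjecture to the six-dimensional setting via Nagy's decomposition and then exploit the submanifold techniques on which this paper is built. By Nagy's theorem, every complete strict nearly K\"ahler manifold is finitely covered by a Riemannian product whose factors are homogeneous 3-symmetric spaces, twistor spaces over positive quaternionic K\"ahler manifolds with their canonical nearly K\"ahler structure, and 6-dimensional strict nearly K\"ahler manifolds. The first two classes are already 3-symmetric---for the twistor factors one appeals to the LeBrun--Salamon classification of positive quaternionic K\"ahler manifolds, known in low dimension and conjectural in general. The conjecture therefore reduces to showing that every compact 6-dimensional strict nearly K\"ahler manifold is homogeneous, hence is one of Butruille's four models.

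In dimension six I would pass from the intrinsic to an extrinsic picture so that the paper's main result (no non-homogeneous 6-dimensional nearly K\"ahler submanifold of a space form) can be applied. A compact strict nearly K\"ahler 6-manifold $M$ is Einstein of positive scalar curvature, so by Takahashi's theorem the first eigenfunctions of the Laplacian produce an isometric minimal immersion of $M$ into some round sphere $S^{N}$. The goal is to upgrade this to a \emph{nearly K\"ahler} isometric immersion into $S^{N}$, so that the paper's result forces $M$ to be homogeneous, after which Butruille's classification of homogeneous 6-dimensional nearly K\"ahler manifolds completes the argument.

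The decisive step, and the main obstacle, is the compatibility between the almost complex structure $J$ and the Takahashi immersion: the construction via Laplace eigenfunctions sees only the metric, whereas the nearly K\"ahler data are encoded in the canonical Hermitian connection with parallel totally skew-symmetric torsion. One promising route is to combine the real Killing spinor that characterizes strict nearly K\"ahler 6-structures with B\"ar's correspondence between Killing spinors on $M$ and parallel spinors on the Riemannian cone $C(M)$, which reduces the holonomy of $C(M)$ to $G_{2}$. The resulting parallel $G_{2}$-structure on the cone should rigidify the embedding $M\hookrightarrow S^{N}\subset\mathbb{R}^{N+1}$ in a way that interacts compatibly with $J$, for instance by forcing the second fundamental form to respect $J$ up to the prescribed umbilic foliation studied in the paper. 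If this compatibility can be secured, the paper's submanifold classification together with Butruille's list yields the conjecture; failing that, the argument at least pinpoints the obstruction as a rigidity statement for $G_{2}$-cones over nearly K\"ahler 6-manifolds.
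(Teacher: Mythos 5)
This statement is presented in the paper as an \emph{open conjecture} (Butruille's conjecture); the authors explicitly say it is still open and offer no proof of it, proving only the much weaker statement that nearly K\"ahler manifolds which happen to admit an isometric immersion into a space form are homogeneous. So there is no proof in the paper to compare yours against, and your proposal is not a proof either: you yourself flag the ``decisive step'' as an unresolved obstacle, and an argument of the form ``if this compatibility can be secured, the conjecture follows'' does not close anything.

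Two concrete steps would fail even as a plan. First, your reduction of the twistor factors to 3-symmetric spaces invokes the LeBrun--Salamon classification of positive quaternionic K\"ahler manifolds, which is itself an open conjecture (as the paper notes, it is settled only in dimensions 8 and 12 by Poon--Salamon and Herrera--Herrera); so even the non-6-dimensional part of your reduction is conditional. Second, and more seriously, Takahashi's theorem does not produce an isometric minimal immersion of an arbitrary compact Einstein manifold into a round sphere. The map built from an $L^{2}$-orthonormal basis $f_{1},\dots,f_{N}$ of the first eigenspace induces the metric $\sum_{i}df_{i}\otimes df_{i}$, which is proportional to $g$ only in special circumstances --- for instance for isotropy-irreducible homogeneous spaces, where Schur's lemma forces proportionality. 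For a hypothetical \emph{non-homogeneous} compact nearly K\"ahler 6-manifold there is no reason whatsoever for this map to be isometric, so you cannot place $M$ inside a space form in order to invoke the paper's Corollary \ref{collorally}. The argument is circular: the homogeneity you are trying to prove is essentially what is needed to manufacture the immersion in the first place. Moreover, even granting an isometric immersion into $S^{N}$, the paper's mechanism only bites when the complex and invariant totally umbilic distribution $D^{\eta}$ is nonzero for some normal section $\eta$ (see Proposition 3.4 of the paper); for a minimal immersion of high codimension there is no a priori umbilic direction, so the foliation could be trivial and the homogeneity conclusion would not follow. The conjecture remains open, and your outline correctly identifies where the difficulty lies but does not overcome it.
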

This conjecture is still open. By the work of Nagy\cite{Nag1,Nag2},
it may be separated into two sub-conjectures. The first one is a
similar conjecture on quaternionic K\"ahler manifolds and symmetric
spaces, where there are several reasons to believe that it is true
(it was solved by Poon and Salamon\cite{poon} in dimension 8 and
recently by Haydee and Herrera\cite{hay} in dimension 12). The
second may be formulated as: {\it The only compact, simply
connected, irreducible (with respect to the holonomy of the
intrinsic Hermitian connection), 6-dimensional, nearly K\"ahler
manifolds is the sphere $S^{6}$}. This somehow concerns the core of
the nearly K\"ahler geometry and might explain the rareness of such
manifolds or the difficulty to produce non-homogeneous examples.

In this paper, to further explore the latter conjecture, we study
isometric immersions $f:M^{2n}\longrightarrow\mathbb{Q}_{c}$ from
nearly K\"ahler Manifolds into a space form (specially an Euclidean
space) of constant curvature $c$. We show that every nearly K\"ahler submanifold of
a space form has a foliation such the leaves of this foliation are six dimension,
homogeneous nearly K\"ahler manifolds. As a result we
show that if $n=3$ there is no non-homogeneous nearly K\"ahler
submanifold of a space form and prove that the only 6-dimensional,
irreducible (with respect to the holonomy of the intrinsic
connection), complete(compact), simply connected nearly K\"ahler
submanifolds of a space form is $S^{6}$. This proves the second part
of the Batrulle conjecture in the situation that the nearly K\"ahler
manifolds are immersed in a space form.
\begin{theorem}\label{main theorem}
Let $f:M^{2n}\longrightarrow\mathbb{Q}^{2n+p}_{c}$ be an isometric immersion from a complete, simply connected strictly nearly K\"ahler
manifold into a space form of constant curvature $c$, then there
is an involute totally umbilic
foliation on $M$ such that the leaves of this
foliation are 6-nearly K\"ahler homogeneous manifolds. Moreover,
each leaf coincides with a 6-dimensional nearly K\"ahler factor
appearing in the Nagy decomposition.
\end{theorem}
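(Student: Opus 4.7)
The plan is to combine Nagy's structural theorem for nearly K\"ahler manifolds with classical rigidity results for isometric immersions of Riemannian products into space forms, and then invoke Butruille's classification to identify the leaves of the resulting foliation.

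First, I would apply Nagy's decomposition to $M$: since $M$ is complete, simply connected, and strictly nearly K\"ahler, it splits as a Riemannian product $M = M' \times N_1 \times \cdots \times N_r$, where $M'$ collects the twistor-space factors and the higher-dimensional homogeneous $3$-symmetric factors, and each $N_i$ is a $6$-dimensional strict nearly K\"ahler manifold. Each $TN_i$ defines a $\nabla$-parallel distribution of rank $6$ on $M$; being parallel with respect to the Levi-Civita connection, it is automatically involutive, and its maximal integral leaves are submanifolds of $M$ isometric to $N_i$ and totally geodesic in $M$.

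Second, I would study how the immersion $f$ interacts with this product structure. Because the ambient space is a space form, a Moore/N\"olker-type reduction theorem tells us that once the second fundamental form $\alpha$ of $f$ satisfies $\alpha(X,Y)=0$ for $X$ and $Y$ tangent to different factors, the immersion itself splits extrinsically into a product of isometric immersions into space forms. Using the parallelism and total skew-symmetry of the canonical Hermitian torsion, the identities characterizing nearly K\"ahler curvature, and the Codazzi equation, I would show that the $N_i$ distributions are indeed $\alpha$-asymptotic, so that each leaf is not only totally geodesic in $M$ but totally umbilic in $\mathbb{Q}^{2n+p}_c$; combined with the leaves being totally geodesic in $M$ and $M$ being itself umbilically immersed in the space form, this yields the claimed totally umbilic foliation.

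Third, to conclude homogeneity, I would use that each leaf is now realized as a strictly nearly K\"ahler $6$-dimensional manifold umbilically immersed in a space form. Together with the Einstein property of $6$-dimensional strict nearly K\"ahler manifolds (Gray) and the existence of a real Killing spinor, this places strong extrinsic constraints; combined with Butruille's classification of compact homogeneous $6$-dimensional strict nearly K\"ahler manifolds, it forces each $N_i$ to be one of $S^6$, $S^3\times S^3$, $\mathbb{C}P^3$, or $\mathbb{F}^3$, which is precisely a $6$-dimensional factor appearing in the Nagy decomposition.

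The main obstacle I anticipate lies in the second step: verifying that the second fundamental form of $f$ genuinely splits along the Nagy decomposition. The nearly K\"ahler hypothesis only gives infinitesimal information about $J$ and its canonical torsion, whereas a Moore-type reduction requires the global vanishing of mixed terms of $\alpha$. Controlling the coupling between the non-$6$-dimensional factor $M'$ and each $6$-dimensional $N_i$ through the shape operator, and ruling out any extrinsic twisting between them, is the most delicate part and will require a careful combination of the Gauss--Codazzi--Ricci equations with the parallel-torsion property of the canonical Hermitian connection.
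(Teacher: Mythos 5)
Your strategy runs in the opposite direction from the paper's, and as written it has several genuine gaps. The paper does not start from the Nagy decomposition at all: it builds the foliation \emph{extrinsically}, defining at each point the distribution $D_x=\Delta_x\cap\Delta_x'\cap\Delta_x''$ cut out by the second fundamental form (umbilicity with respect to a normal section $\eta$, $J$-invariance, and invariance under the canonical torsion $T$), proves involutivity via the Codazzi equation together with $\bar{\nabla}T=0$ and Gray's curvature identities, and only \emph{afterwards} identifies the leaves with the $6$-dimensional Nagy factors. The key step you are missing is how one ever concludes the leaves are $6$-dimensional and homogeneous: in the paper, umbilicity plus the Gauss equation give $\|(\nabla_XJ)Y\|^2=\|\eta\|^2\bigl(\|X\|^2\|Y\|^2-\langle X,Y\rangle^2-\langle X,JY\rangle^2\bigr)$ on each leaf, so $\nabla J$ has constant type, which by Gray's theorem forces dimension $6$; and the Gauss equation makes the leaf's curvature tensor that of a constant-curvature metric, whence $\bar{\nabla}\bar{R}=0$ and local homogeneity by Ambrose--Singer. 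Your third step instead invokes Butruille's classification of \emph{homogeneous} $6$-dimensional nearly K\"ahler manifolds to ``conclude homogeneity,'' which is circular: you must first establish homogeneity before that classification applies.

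Two further points would make your second step fail even if the circularity were repaired. First, a Moore/N\"olker splitting along the Nagy product would give at best $\alpha(X,Y)=0$ for $X,Y$ tangent to different factors (and you correctly flag that even this is unproven); it does not make each $6$-dimensional factor \emph{totally umbilic} in $\mathbb{Q}_c^{2n+p}$, which is the property actually needed to run the constant-type and Ambrose--Singer arguments. Indeed, calling the distributions ``$\alpha$-asymptotic'' would make the leaves totally geodesic in the ambient space, not umbilic. Second, your assertion that ``$M$ is itself umbilically immersed in the space form'' is unjustified and false for a general isometric immersion $f$; if it held, $M$ would already be an extrinsic sphere and the theorem would be vacuous. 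The paper sidesteps all of this by never claiming the Nagy foliation is umbilic a priori: it takes the \emph{maximal} umbilic-and-invariant distribution, proves it integrates to a (possibly singular, possibly incomplete-leaved) Sussmann--Stefan foliation, and proves separately (in a converse proposition) that this distribution is nonzero exactly when a $6$-dimensional Nagy factor is present.
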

\begin{corollary}\label{collorally}
Suppose that $f:M^{6}\longrightarrow\mathbb{Q}_c$ is an isometric
immersion from a 6-dimensional complete nearly K\"ahler manifold
into a space form, then for every $x\in M$ there is a subgroup of
local isometries of $M$  which acts transitively on an open
neighborhood of $x$. If $M^{6}$ is simply connected, then it is a
homogeneous manifold.
\end{corollary}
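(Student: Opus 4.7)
The plan is to reduce the corollary to Theorem~\ref{main theorem} by lifting $f$ to the universal Riemannian cover of $M$ and then exploiting the dimension equality $\dim M=6$ to collapse the resulting foliation to a single leaf.

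First I would pass to the universal cover $\pi\colon\widetilde M\to M$. The nearly K\"ahler data on $M$ pulls back canonically along $\pi$, and since $\pi$ is a local isometry and $M$ is complete, $\widetilde M$ is a complete, simply connected, strictly nearly K\"ahler manifold; the composition $\widetilde f:=f\circ\pi\colon\widetilde M\to\mathbb{Q}_c$ is an isometric immersion. Theorem~\ref{main theorem} thus yields an involutive totally umbilic foliation $\mathcal F$ on $\widetilde M$ whose leaves are $6$-dimensional homogeneous nearly K\"ahler manifolds, each coinciding with a $6$-dimensional factor of the Nagy decomposition of $\widetilde M$.

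Next, because $\dim\widetilde M=6$ coincides with the leaf dimension, every leaf of $\mathcal F$ is open in $\widetilde M$; since leaves partition $\widetilde M$, each is also closed, hence a union of connected components. Connectedness of $\widetilde M$ then forces $\mathcal F$ to consist of a single leaf, so $\widetilde M$ itself is a $6$-dimensional homogeneous nearly K\"ahler manifold, and its isometry group $G=\mathrm{Iso}(\widetilde M)$ acts transitively.

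Finally I would transfer this structure to $M$. Given $x\in M$, pick a lift $\widetilde x\in\pi^{-1}(x)$ and a neighborhood $\widetilde U\ni\widetilde x$ on which $\pi$ restricts to a diffeomorphism onto $U\ni x$; for each $y\in U$ with distinguished lift $\widetilde y\in\widetilde U$, choose $\widetilde\varphi\in G$ with $\widetilde\varphi(\widetilde x)=\widetilde y$, and set $\varphi_y:=\pi\circ\widetilde\varphi\circ(\pi|_{\widetilde U})^{-1}$ on a sufficiently small neighborhood of $x$. Each $\varphi_y$ is a local isometry of $M$ taking $x$ to $y$, and the family of all such maps generates a subgroup of local isometries acting transitively on $U$. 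When $M$ is simply connected, $\pi$ is a diffeomorphism, so $G$ itself acts transitively by isometries on $M$, proving the second claim. I expect the main obstacle to be step two, namely recognizing that the coincidence between leaf dimension and manifold dimension collapses $\mathcal F$; once this is settled, the descent through $\pi$ is routine.
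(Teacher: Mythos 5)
Your proposal is correct and follows essentially the same route as the paper: the decisive step in both is that the leaves of the complex and invariant totally umbilic foliation are $6$-dimensional, hence open (and closed) in the connected $6$-manifold, so the foliation collapses to a single leaf and homogeneity follows. The only cosmetic difference is that you pass to the universal cover to meet the simple-connectedness hypothesis of Theorem~\ref{main theorem} and then descend isometries to $M$, whereas the paper works directly on $M$ using the local homogeneity (Ambrose--Singer property) of the leaves and invokes the Ambrose--Singer theorem only at the end, when $M$ is assumed simply connected.
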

\begin{definition}
An isometric immersion $f:M\longrightarrow\bar{M}$ between
Riemannian manifolds called a full isometric immersion if there is
no totally geodesic submanifold $N$ of $\bar{M}$ of dimension
strictly less than $dim\bar{M}$ such that the restriction of $f$ to
$N$ is an isometric immersion.
\end{definition}
For example $M$ is in $\mathbb{R}^{n}$ with full isometric immersion
if and only if it is not contained (under $f$) in any affine
hypersurface of $\mathbb{R}^{n}$.
\begin{theorem}\label{6-dim theorem}
Let $f:M^{6}\longrightarrow\mathbb{R}^{n}$ be an isometric immersion
from a complete (compact) simply connected strictly 6-nearly
K\"ahler manifold into the Euclidean space. If $M$ is
irreducible with respect to the holonomy of the intrinsic  Hermitian
connection then $M$ is locally isometric with $S^{6}$ and if $f$ is
a full isometric immersion then $f$ is equivalent with natural
embedding of $S^6$ in $R^{7}$ (namely, there exist isometric map
$j:\mathbb{R}^{7}\longrightarrow\mathbb{R}^{7}$ and locally
isometric map $g:M^{6}\longrightarrow S^6$ such that $j\circ
f=i\circ g$ where $i$ is the standard embedding of $S^6$ as a
quadric hypersurface in $\mathbb{R}^{7}$).
\end{theorem}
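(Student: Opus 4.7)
The plan is to reduce the theorem to a classification problem using the main results of the paper and then invoke rigidity of sphere embeddings. Since $M^{6}$ is a complete simply connected strictly nearly K\"ahler manifold isometrically immersed into $\mathbb{R}^{n}$ (the case $c=0$), Corollary \ref{collorally} immediately gives that $M$ is a homogeneous Riemannian manifold. Applying Theorem \ref{main theorem} directly with $2n=6$, the totally umbilic foliation produced there is trivial: its unique leaf is $M$ itself, and that leaf coincides with a six-dimensional nearly K\"ahler factor appearing in Nagy's decomposition. By Butruille's classification of homogeneous complete 6-dimensional strict nearly K\"ahler manifolds, $M$ must therefore be isometric to one of $S^{6}$, $S^{3}\times S^{3}$, $\mathbb{C}P^{3}$, or $\mathbb{F}^{3}$ with its canonical 3-symmetric nearly K\"ahler metric.

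To single out $S^{6}$ from among these four candidates I would use the irreducibility hypothesis on the intrinsic Hermitian holonomy. On each 3-symmetric space $G/H$, the intrinsic Hermitian connection is the canonical connection of the 3-symmetric structure, and its holonomy coincides with the image of the linear isotropy representation of $H$ on $\mathfrak{m}\cong T_{o}(G/H)$. For $S^{6}=G_{2}/SU(3)$ the representation of $SU(3)$ on $\mathfrak{m}\cong\mathbb{C}^{3}$ is irreducible, whereas for each of the other three candidates $\mathfrak{m}$ decomposes nontrivially under the isotropy: as $\mathfrak{su}(2)\oplus\mathfrak{su}(2)$ for $S^{3}\times S^{3}$, as $\mathbb{H}\oplus\mathbb{C}$ for $\mathbb{C}P^{3}=Sp(2)/(SU(2)\cdot U(1))$, and as three complex lines for $\mathbb{F}^{3}=SU(3)/T^{2}$. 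Hence the irreducibility assumption forces $M$ to be locally (and by completeness and simple connectedness, globally) isometric to the round sphere $S^{6}$.

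For the rigidity statement, once $M$ is known to be $S^{6}$ with its constant curvature metric I would appeal to the classical rigidity theorem for full isometric immersions of the round sphere into Euclidean space: every isometric immersion $f:S^{6}\to\mathbb{R}^{n}$ factors, up to an isometry of the ambient space, through the standard quadric embedding $i:S^{6}\hookrightarrow\mathbb{R}^{7}$ followed by a totally geodesic inclusion $\mathbb{R}^{7}\hookrightarrow\mathbb{R}^{n}$. This rests on the observation that the second fundamental form of any such immersion is parallel with constant norm (since the image is contained in a round umbilical sphere) and on the uniqueness of umbilical hyperspheres of $\mathbb{R}^{n+1}$ up to ambient isometry. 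Fullness of $f$ then forces $n=7$, and one extracts the required ambient isometry $j:\mathbb{R}^{7}\to\mathbb{R}^{7}$ together with a local isometry $g:M\to S^{6}$ satisfying $j\circ f=i\circ g$.

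The most delicate step is the middle one: linking the abstract hypothesis of irreducibility of the intrinsic Hermitian holonomy to the reducibility of the linear isotropy representation of $H$ on $\mathfrak{m}$ for each of the four homogeneous examples. This uses Nagy's characterization of strict nearly K\"ahler manifolds via their canonical Hermitian connection together with the identification, on each 3-symmetric space, of that connection with the canonical 3-symmetric connection. Once this identification is secured and the three reducible candidates are eliminated, the remainder of the argument is essentially a combination of the earlier results of the paper with standard sphere-rigidity in Euclidean geometry.
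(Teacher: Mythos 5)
Your first half follows the paper's own route: Corollary \ref{collorally} gives homogeneity, Butruille's classification gives the four candidates, and irreducibility of the intrinsic Hermitian holonomy eliminates $S^{3}\times S^{3}$, $\mathbb{C}P^{3}$ and $\mathbb{F}^{3}$. The paper does this by citing Belgun--Moroianu \cite{bel} for the two twistor spaces and asserting reducibility for $S^{3}\times S^{3}$; your direct inspection of the isotropy representations of the $3$-symmetric presentations is an acceptable (arguably cleaner) substitute, so that part is fine.

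The gap is in the second half. The ``classical rigidity theorem'' you invoke --- that every full isometric immersion of the round $S^{6}$ into $\mathbb{R}^{n}$ is ambient-congruent to the umbilical embedding into an affine $\mathbb{R}^{7}$ --- is not a theorem for unrestricted $n$: composing the standard embedding $S^{6}\subset\mathbb{R}^{7}$ with a non-totally-geodesic flat hypersurface $\mathbb{R}^{7}\to\mathbb{R}^{8}$ (a cylinder over a plane curve) yields a full, non-umbilical isometric immersion of the round $S^{6}$ into $\mathbb{R}^{8}$. Your supporting sentence is moreover circular: you justify parallelism of the second fundamental form ``since the image is contained in a round umbilical sphere,'' which is exactly what has to be proved. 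The paper closes this gap extrinsically, not by sphere rigidity: since the only leaf of the foliation $D^{\eta}$ is $M$ itself, $D^{\eta}_{x}=T_{x}M$ and hence $\alpha(X,Y)=\langle X,Y\rangle\eta$ for all $X,Y$; consequently the first normal space $\mathcal{N}^{1}=\mathrm{span}\{\alpha(X,Y)\}=\langle\eta\rangle$ is one-dimensional and parallel in the normal bundle, so Erbacher's reduction-of-codimension theorem \cite{Erb} places $f(M)$ inside a $7$-dimensional totally geodesic affine subspace, fullness forces $n=7$, and only then is the codimension-one rigidity of \cite{Ber} applied. Your argument is missing precisely this step --- some control of the first normal space or, equivalently, extrinsic umbilicity of $f$ --- and intrinsic roundness of $M$ alone does not supply it.
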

The paper is organized as follows. In section 2 basic definitions and preliminaries are given. Main reference of this section is \cite{But}. In section 3 we introduce complex and invariant totally umbilic foliation and prove the main theorem and its corollaries. In section 4 we use this foliation to classify nearly K\"ahler hypersurfaces in space forms using the principal curvature.

\section{Preliminaries}

Nearly K\"ahler manifolds are almost Hermitian, i.e. $2n$-dimensional real manifolds with a $U(n)$-structure (a $U(n)$-reduction of the frame bundle) or equivalently, with a pair of tensors $(g,J)$ or $(g,\omega)$ where $g$ is a Riemannian metric, $J$ is an almost complex structure  compatible with $g$ in the sense that $g(JX,JY)=g(X,Y)$ for each $X, Y\in TM$, ($J$ is orthogonal with respect to $g$ point-wise) and $\omega$ is a differential 2-form, called the K\"ahler form, related to $g$ and $J$ by $\omega(X,Y)=g(JX,Y)$, for $X,Y\in TM$.\\
Associated with $g$ there is the well-known Levi-Civita connection $\nabla$ which is metric preserving and torsion-free. But nearly K\"ahler manifolds, as every almost Hermitian manifolds, have another natural connection $\bar{\nabla}$ called the intrinsic connection or the canonical Hermitian connection, which shall be of considerable importance in the sequel. Let $\mathfrak{so}(M)$ be the bundle of skew-symmetric endomorphisms of the tangent space (the adjoint bundle of the metric structure). The set of metric connections of $(M,g)$ is an affine space $\mathcal{SO}$  modelled on the space of sections of $T^{*}M\otimes\mathfrak{so}(M)$.The set $\mathcal{U}$ of Hermitian connections (connections which preserve both the metric and the almost complex structure or the K\"ahler form) is an affine subspace of $\mathcal{SO}$ with vector space $\Gamma(T^{*}M\otimes\mathfrak{u}(M))$, where $\Gamma(T^{*}M\otimes\mathfrak{u}(M))$ is the subbundle of $\mathfrak{so}(M)$ formed by the endomorphisms which commute with $J$ (in other words, the adjoint bundle of the $U(n)$-structure). We denote the orthogonal complement of $\mathfrak{u}(M)$ in $\mathfrak{so}(M)$ by $\mathfrak{u}(M)^{\perp}$, it is identified with the bundle of skew-symmetric endomorphisms of $TM$, anti-commuting with $J$.
\begin{definition}
 The canonical Hermitian connection $\bar{\nabla}$ is the orthogonal projection of  $\nabla\in\mathcal{SO}$ on $\mathcal{U}$. Equivalently, it is the unique Hermitian connection such that $\nabla -\bar{\nabla}$ is a 1-form with values in $\mathfrak{u}(M)^{\perp}$.
\end{definition}
The difference $\eta=\nabla -\bar{\nabla}$ is known explicitly, $\eta X=\frac{1}{2}J\circ(\nabla_{X}J)$  for all $X\in TM$, and measures the failure of the $U(n)$-structure to admit a torsion-free connection. It can be used to classify almost Hermitian manifolds. For example, K\"ahler manifolds are characterized by $\nabla$ being a Hermitian connection: $\bar{\nabla}=\nabla$.
\begin{definition}
Let M be an almost Hermitian manifold. The following conditions are equivalent and define a nearly K\"ahler manifold:\\
(1) the torsion of $\bar{\nabla}$ is totally skew-symmetric,\\
(2) $(\nabla_{X}J)X=0$  for all $X\in TM$,\\
(3) $\nabla_{X}\omega=\frac{1}{3}\mathfrak{i}_{X}d\omega$ for all $X\in TM$,\\
(4) $d\omega$ is of type $(0,3)+(3,0)$ and the Nijenhuis tensor $N$ is totally skew-symmetric.
\end{definition}
\begin{proposition}
For a nearly K\"ahler manifold, the torsion of the intrinsic
Hermitian connection is totally skew-symmetric and
parallel, that is $\bar{\nabla}\eta=0$. Moreover, this is equivalent to
$\bar{\nabla}\nabla\omega=0$ or $\bar{\nabla}d\omega=0$.
\end{proposition}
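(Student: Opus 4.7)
To prove the proposition I would split the statement into three logically independent pieces: the total skew-symmetry of the torsion, the equivalence of the three parallelism assertions, and the substantive claim $\bar\nabla\eta=0$. For the skew-symmetry, I start from the defining identity $(\nabla_X J)X=0$ and polarize in $X$ to obtain $(\nabla_X J)Y+(\nabla_Y J)X=0$. Using the formula $\eta_X=\tfrac12 J\circ(\nabla_X J)$ this translates to $\eta_X Y+\eta_Y X=0$, so the torsion $T(X,Y)=\bar\nabla_X Y-\bar\nabla_Y X-[X,Y]=-\eta_X Y+\eta_Y X$ reduces to $-2\eta_X Y$ and is therefore skew in $X,Y$. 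Since $\eta_X$ takes values in $\mathfrak u(M)^{\perp}\subset\mathfrak{so}(M)$, the map $g(\eta_X Y,Z)$ is already skew in $Y,Z$; combining this with skewness in $X,Y$ gives the full $3$-form skew-symmetry of $g(T(X,Y),Z)$.

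For the equivalence of the three parallelism statements, I use that $\bar\nabla$ is by construction Hermitian, hence $\bar\nabla J=0$, $\bar\nabla\omega=0$, $\bar\nabla g=0$. Applying the Leibniz rule to $\eta=\tfrac12 J\circ\nabla J$ gives $\bar\nabla\eta=\tfrac12 J\circ\bar\nabla(\nabla J)$, so $\bar\nabla\eta=0$ is equivalent to $\bar\nabla(\nabla J)=0$; raising or lowering indices with $g$ in turn identifies this with $\bar\nabla(\nabla\omega)=0$. Finally, condition (3) of the preceding definition reads $\nabla_X\omega=\tfrac13\iota_X d\omega$, while $d\omega$ is in any case the alternation of $\nabla\omega$; since both operations commute with $\bar\nabla$ (which is a metric linear connection), $\bar\nabla d\omega=0$ is equivalent to $\bar\nabla(\nabla\omega)=0$.

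For the substantive claim $\bar\nabla\eta=0$ I would follow Kirichenko's approach. Using $\bar\nabla=\nabla-\eta$ and applying Leibniz one computes
\[ (\bar\nabla_W\eta)_X Y = (\nabla_W\eta)_X Y + \eta_{\eta_W X} Y + [\eta_X,\eta_W] Y, \]
and the aim is to show the right-hand side vanishes identically. The key input is Gray's second-order nearly K\"ahler identity, which expresses $(\nabla^2_{W,X} J)Y$ as an algebraic quadratic in $\nabla J$. This is obtained by iteratively polarizing $(\nabla_X J)X=0$, invoking the Ricci identity $[\nabla_W,\nabla_X] J=[R(W,X),J]$ to trade one instance of $\nabla^2 J$ for $\nabla J$ plus a curvature term, and eliminating the $J$-anti-invariant part of the Riemann tensor via Gray's first identity (which expresses that anti-invariant part itself through $\nabla J\otimes\nabla J$). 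The main obstacle lies precisely here: deriving and bookkeeping Gray's second identity. Once the algebraic formula for $\nabla^2 J$ is in hand, substitution into the display above, combined with the total skew-symmetry established in the first paragraph, yields $\bar\nabla\eta=0$ after a routine cancellation; parallelism of $T=-2\eta$ is then automatic, and the two remaining reformulations follow from the equivalences of the second paragraph.
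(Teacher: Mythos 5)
The paper itself offers no proof of this proposition: it is quoted as a standard preliminary (going back to Gray and to Kirichenko's theorem, via Butruille's survey), so there is no in-paper argument to compare yours against. Your outline is the standard route and is structurally correct. The skew-symmetry part is fine: polarizing $(\nabla_XJ)X=0$ gives $\eta_XY+\eta_YX=0$, $T=-2\eta$, and skewness under the transpositions $(X,Y)$ and $(Y,Z)$ generates full skewness of $g(T(X,Y),Z)$. The chain of equivalences is also fine, using $\bar{\nabla}J=0$, $\bar{\nabla}g=0$ and the nearly K\"ahler identity $\nabla_X\omega=\frac{1}{3}\mathfrak{i}_Xd\omega$ to pass between $\eta$, $\nabla\omega$ and $d\omega$. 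Your expansion $(\bar{\nabla}_W\eta)_XY=(\nabla_W\eta)_XY+\eta_{\eta_WX}Y+[\eta_X,\eta_W]Y$ checks out, and the key input you name --- the second-order Gray identity expressing $\nabla^2J$ algebraically in $\nabla J$ --- is exactly the last formula recorded in Lemma \ref{formula} of the paper, so your key lemma is the one the paper itself relies on. The only soft spots are that you assert rather than carry out the final substitution and cancellation, and you only sketch the derivation of the second-order identity (Ricci identity plus Gray's first curvature identity); these are genuinely the computational heart of Kirichenko's theorem, but they are well documented and your bookkeeping of where they enter is accurate, so I would regard the proposal as a correct proof outline rather than a gap.
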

Now, suppose that the curvature of $\bar{\nabla}$ is also parallel: $\bar{\nabla}\bar{R}=0$. Then M is locally homogeneous or an Ambrose-Singer manifold.
\begin{lemma}\label{formula}
\cite{Gray2}
Assume that $(M,g,J)$ is a nearly K\"ahler manifold then\\
\begin{eqnarray*}
 &(\nabla_{X}J)Y+(\nabla_{Y}J)X=0\\
 &(\nabla_{JX}J)JY=(\nabla_{X}J)JY \\
 &J(\nabla_{X}J)Y=-(\nabla_{X}J)JY=-(\nabla_{JX}J)Y\\
 &g(\nabla_{X}Y,X)=g(\nabla_{X}JY, JX)\\
 &2g((\nabla_{W,X}^2J)Y,Z)=-\sigma_{X,Y,Z}g((\nabla_WJ)X,(\nabla_{Y}J)JZ)
\end{eqnarray*}
\end{lemma}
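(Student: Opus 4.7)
The plan is to derive the five identities in order, each reducing to the previous ones plus the defining properties $(\nabla_X J)X=0$ and $J^2=-I$.

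For identity (1), I would simply polarize condition (2) of the definition of a nearly K\"ahler manifold: setting $X\mapsto X+Y$ in $(\nabla_{X}J)X=0$ and using bilinearity of $\nabla J$ cancels the two diagonal terms and leaves exactly $(\nabla_{X}J)Y+(\nabla_{Y}J)X=0$. Identity (3) I would obtain in two pieces. The first equality $J(\nabla_{X}J)Y=-(\nabla_{X}J)JY$ follows from differentiating $J^{2}=-\mathrm{Id}$, which gives $(\nabla_{X}J)\circ J+J\circ(\nabla_{X}J)=0$. The second equality $(\nabla_{X}J)JY=(\nabla_{JX}J)Y$ then follows by applying identity (1) twice: first to $(\nabla_{JX}J)Y=-(\nabla_{Y}J)JX$, then using the first half of (3) to rewrite $(\nabla_{Y}J)JX=-J(\nabla_{Y}J)X$, and finally applying (1) once more together with $J(\nabla_{X}J)Y=-(\nabla_{X}J)JY$. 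Identity (2) is then just the substitution $Y\mapsto JY$ in the relation from (3) (with the appropriate sign bookkeeping coming from $J^{2}=-\mathrm{Id}$).

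For identity (4) I would differentiate the compatibility $g(JY,JX)=g(Y,X)$ along the direction $X$. Expanding $\nabla_{X}(JY)=(\nabla_{X}J)Y+J\nabla_{X}Y$ and the analogous expression for $\nabla_{X}(JX)$, the $J$-linear pieces reassemble into $g(\nabla_{X}JY,JX)-g(\nabla_{X}Y,X)$, and the remaining terms involve the quantities $g((\nabla_{X}J)Y,JX)$ and $g(JY,(\nabla_{X}J)X)$. The second vanishes immediately by $(\nabla_{X}J)X=0$, and the first vanishes after using (1) and the skew-symmetry of $(\nabla_{X}J)$ with respect to $g$ (itself a consequence of differentiating $g(JU,V)+g(U,JV)=0$).

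Identity (5) is the real work. The plan is to compute $(\nabla_{W,X}^{2}J)Y$ directly as $\nabla_{W}((\nabla_{X}J)Y)-(\nabla_{\nabla_{W}X}J)Y-(\nabla_{X}J)(\nabla_{W}Y)$, pair it with a test vector $Z$, and then exploit the first Bianchi identity for the Riemannian curvature together with the relation $[\nabla_{W},\nabla_{X}]J=R(W,X)\circ J-J\circ R(W,X)$. The trick (essentially Gray's, from \cite{Gray2}) is that taking the cyclic sum $\sigma_{X,Y,Z}$ on the right-hand side kills the symmetric curvature contribution, because the Bianchi identity forces $\sigma_{X,Y,Z}R(X,Y)Z=0$, leaving only quadratic terms in $\nabla J$. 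Each surviving term can then be rewritten using (1)--(4) into the form $g((\nabla_{W}J)X,(\nabla_{Y}J)JZ)$. The main obstacle is tracking signs and making sure that after cyclically antisymmetrizing in $(X,Y,Z)$ exactly the claimed expression survives; I expect to need to apply identity (3) several times to pull $J$ factors through to land on $(\nabla_{Y}J)JZ$ in the correct slot, and to use identity (4) (symmetry of $g(\nabla J\cdot,J\cdot)$ in appropriate arguments) to pool the six terms of the cyclic sum into the right shape with the factor of $2$ on the left.
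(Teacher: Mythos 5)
The paper itself gives no proof of this lemma: it is quoted directly from Gray \cite{Gray2}, so there is no in-paper argument to compare yours against, and your proposal is in effect a reconstruction of Gray's original derivations. For the first, third and fourth identities your reconstruction is correct and is indeed the standard one: polarization of $(\nabla_{X}J)X=0$, differentiation of $J^{2}=-\mathrm{Id}$, and differentiation of $g(JX,JY)=g(X,Y)$ using $(\nabla_{X}J)X=0$ and the skew-adjointness of $\nabla_{X}J$. One point you must make explicit rather than bury under ``appropriate sign bookkeeping'': the second identity as printed, $(\nabla_{JX}J)JY=(\nabla_{X}J)JY$, is false on any strictly nearly K\"ahler manifold, since together with the third identity it gives $(\nabla_{X}J)(\mathrm{Id}-J)Y=0$ and hence $\nabla J\equiv 0$. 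Your own method --- substituting $Y\mapsto JY$ into $(\nabla_{JX}J)Y=(\nabla_{X}J)JY$ --- yields the correct statement $(\nabla_{JX}J)JY=-(\nabla_{X}J)Y$; you are proving a corrected identity, not the printed one, and should say so.

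The plan for the fifth identity has a genuine gap. The Ricci identity $[\nabla_{W},\nabla_{X}]J=R(W,X)\circ J-J\circ R(W,X)$ together with the first Bianchi identity produces only \emph{curvature} terms; no quadratic expressions in $\nabla J$ arise at that stage, so the claim that the cyclic sum ``leaves only quadratic terms in $\nabla J$'' does not describe a computation that can actually be carried out. What that route yields is $2g((\nabla_{W,X}^{2}J)Y,Z)=\sigma_{X,Y,Z}\,g(R_{W,JX}Y,Z)$ (the formula displayed in the paper immediately after the lemma), and to convert this into $-\sigma_{X,Y,Z}\,g((\nabla_{W}J)X,(\nabla_{Y}J)JZ)$ one needs the separate first-order identity $g(R_{W,X}Y,Z)-g(R_{W,X}JY,JZ)=g((\nabla_{W}J)X,(\nabla_{Y}J)Z)$, which is not among the five statements, does not follow from (1)--(4), and requires its own argument; Gray establishes the two in tandem. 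You also omit the ingredient that makes the cyclic summation collapse with the factor $2$ in the first place, namely the total skew-symmetry of $(X,Y,Z)\mapsto g((\nabla_{W,X}^{2}J)Y,Z)$, which comes from the skew-adjointness of $\nabla_{W,X}^{2}J$ together with covariant differentiation of identity (1). Without these two inputs the computation you outline does not close.
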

Gray used following formulas to find relation between torsion of  intrinsic Hermitian connection and Riemannian curvature on nearly K\"ahler manifolds \cite{Gray4}. These formulas resemble the corresponding formulas for K\"ahler manifolds.\\
\begin{eqnarray*}
<R_{X,Y}X,Y>-<R_{X,Y}JX,JY>=\|(\nabla_{X}J)Y\|^{2}\\
<R_{W,X}Y,Z>-<R_{W,X}JY,JZ>=<(\nabla_{W}J)X,(\nabla_{Y}J)Z>\\
<R_{W,X}Y,Z>=<R_{JW,JX}JY,JZ>\\
2g((\nabla_{W,X}^{2}J)Y,Z)=\sigma_{X,Y,Z}g(R_{WJX}Y,Z)\\
\end{eqnarray*}

It is easy to check that on every nearly K\"ahler manifold, the tensors $A(X,Y,Z)=<(\nabla_{X}J)Y,Z>$ and $B(X,Y,Z)=<(\nabla_{X}J)Y,JZ>$ are skew-symmetric and have type $(3,0)+(0,3)$ as (real) 3-forms.\\
We need the following classical relation between the covariant derivative of the almost complex structure $J$ and its Nijenhuis tensor $N$ which is proved by a straightforward computation using
\begin{align*}
4N(X,Y)=[X,Y]+J[JX,Y]+J[X,JY]-[JX,JY]
\end{align*}
and the anti-symmetry of the tensors A and B defined above.
\begin{lemma}\label{nijenhus}
 For every nearly K\"ahler manifold $(M,g,J)$ we have
 \begin{align*}
\forall X,Y\in\chi(M)\quad N(X,Y)=J(\nabla_{X}J)Y.
\end{align*}
\end{lemma}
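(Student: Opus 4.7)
The plan is to expand the right-hand side of the classical identity
\[
4N(X,Y)=[X,Y]+J[JX,Y]+J[X,JY]-[JX,JY]
\]
using the fact that the Levi-Civita connection $\nabla$ is torsion-free, and then simplify with the identities of Lemma \ref{formula}. For each bracket I would substitute $[U,V]=\nabla_U V-\nabla_V U$, and then use the product rule $\nabla_U(JV)=(\nabla_U J)V+J\nabla_U V$ to move $J$ past covariant derivatives. After applying $J^2=-\mathrm{Id}$, a first round of cancellation eliminates all the terms of the form $\nabla_X Y$, $\nabla_Y X$, $J\nabla_{JX}Y$ and $J\nabla_{JY}X$, leaving only an expression built out of the tensor $(\nabla_{\cdot}J)\cdot$.

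What remains after this purely algebraic step is an expression of the form
\[
-J(\nabla_Y J)X+J(\nabla_X J)Y-(\nabla_{JX}J)Y+(\nabla_{JY}J)X.
\]
Now I would invoke two identities from Lemma \ref{formula}: the skew-symmetry $(\nabla_X J)Y=-(\nabla_Y J)X$ (which handles the $\nabla_Y$ terms) and the relation $(\nabla_{JX}J)Y=-J(\nabla_X J)Y$ (which handles the $\nabla_{JX}$ and $\nabla_{JY}$ terms). Each of the four summands above then collapses to $J(\nabla_X J)Y$, giving $4N(X,Y)=4J(\nabla_X J)Y$ as required.

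The argument is entirely formal, so the main obstacle is not conceptual but bookkeeping: there are eight signed summands in the initial expansion, several of which differ only by the placement of $J$, and one has to keep careful track of signs when pushing $J$ through $\nabla$ and when using $(\nabla_X J)JY=-J(\nabla_X J)Y$. Once the pairwise cancellations are performed correctly, the final application of Lemma \ref{formula} is immediate, and no nearly Kähler hypothesis beyond those already packaged into that lemma is needed.
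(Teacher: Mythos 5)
Your computation is correct: expanding the four brackets with the torsion-free Levi--Civita connection does leave exactly the four $(\nabla J)$-terms you list, and each collapses to $J(\nabla_X J)Y$ via $(\nabla_X J)Y=-(\nabla_Y J)X$ and $(\nabla_{JX}J)Y=-J(\nabla_X J)Y$ from Lemma \ref{formula}. This is essentially the same argument the paper sketches (it cites the same bracket formula together with the skew-symmetry of the tensors $A$ and $B$, which encodes precisely the identities you invoke), so nothing further is needed.
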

In lower dimensions, the nearly K\"ahler manifolds are mainly determined. If $M$ is nearly K\"ahler with $dimM\leq4$, then M is K\"ahler. If $dimM=6$, then we have the following:
\begin{proposition}\cite{Gray2,Gray4,wat1}\label{6-dim}
Let $(M,g, J)$ be a 6-dimensional, strict, nearly K\"ahler manifold. Then\\
 (1) $\nabla J$ has constant type, that is
 \begin{align*}
\|(\nabla_{X}J)Y\|^{2}=\frac{S}{30}{\|X\|^{2}\|Y\|^{2}-g(X,Y)^{2}-g(JX,Y)^{2}}
\end{align*}
\quad for all vector fields $X$ and $Y$,\\
(2) the first Chern class of $(M, J)$ vanishes,\\
(3) $M$ is an Einstein manifold;
\begin{align*}
Ricc=\frac{S}{6}g, \qquad Ricc^{*}=\frac{S}{30}g.
\end{align*}
Moreover if the tensor $\nabla J$ has constant type $\alpha$ then $dimM=6$ and $\alpha=\frac{S}{30}$ where $S$ is scalar curvature.
\end{proposition}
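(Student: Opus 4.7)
The plan is to exploit the rigidity of the real 3-form $A(X,Y,Z):=g((\nabla_X J)Y,Z)$, which by the discussion before Lemma \ref{nijenhus} is totally skew-symmetric and of type $(3,0)+(0,3)$. Fix $p\in M$ and $X\in T_pM$ and set $T_X:=(\nabla_X J)\colon T_pM\to T_pM$. The identities in Lemma \ref{formula} show that $T_X$ is skew-symmetric, anti-commutes with $J$, and kills the complex line $\langle X, JX\rangle$; skew-symmetry then forces the image of $T_X$ to lie inside the $J$-invariant orthogonal complement $W_X:=\langle X, JX\rangle^\perp$. In real dimension six $W_X$ is a four-dimensional Hermitian space, and a direct linear-algebra computation shows that the operators on $W_X$ which are simultaneously skew-symmetric and $J$-anti-commuting form a two-parameter family, with $\|TY\|^2 = k(T)\|Y\|^2$ for every $Y\in W_X$. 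Writing $\alpha(X):=k(T_X|_{W_X})$ and decomposing an arbitrary $Y$ into its $\langle X, JX\rangle$-component and its $W_X$-component, one obtains
\[
\|(\nabla_X J)Y\|^2 = \alpha(X)\bigl(|X|^2|Y|^2 - g(X,Y)^2 - g(JX,Y)^2\bigr).
\]
Since $Y\perp X, JX$ is equivalent to $X\perp Y, JY$, the total skew-symmetry of $A$ gives $\|T_X Y\|^2 = \|T_Y X\|^2$ and hence $\alpha(X)=\alpha(Y)$ for suitable pairs, which forces $\alpha$ to depend only on the point $p$.

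To promote $\alpha$ to a global constant, I would invoke the proposition just above Lemma \ref{formula}: since $\bar\nabla$ preserves both $g$ and $J$ and $\bar\nabla\eta=0$, the tensor $\nabla J$ is $\bar\nabla$-parallel, so $|\nabla J|^2$ is constant. Tracing the displayed identity over an orthonormal frame gives $|\nabla J|^2=24\alpha$, so $\alpha$ is constant. For part (3), I would substitute the constant-type formula into Gray's identity
\[
\langle R_{W,X}Y,Z\rangle - \langle R_{W,X}JY, JZ\rangle = \langle (\nabla_W J)X, (\nabla_Y J)Z\rangle
\]
and take appropriate traces. Constant type turns the traced right-hand side into a scalar multiple of $g(X,Z)$, so the combination of $Ricc$ and $Ricc^{*}$ appearing on the left is pure trace. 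A second trace using $\langle R_{X,Y}X,Y\rangle - \langle R_{X,Y}JX,JY\rangle = \|(\nabla_X J)Y\|^2$ pins down the coefficients separately, giving $Ricc=\frac{S}{6}g$, $Ricc^{*}=\frac{S}{30}g$, and in particular $\alpha = \frac{S}{30}$.

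For part (2), $\bar\nabla$-parallelism of $g$ and of $\nabla J$ implies $\bar\nabla A = 0$, so the complex $(3,0)$-form $\Phi$ whose real part is $A$ is a $\bar\nabla$-parallel section of the canonical line bundle $\Lambda^{3,0}T^*M$, nowhere vanishing because $\alpha>0$ in the strict case; hence $\Phi$ trivialises the canonical bundle, which gives $c_1(M,J)=0$. For the converse in the moreover clause, the constant-type identity with $\alpha\neq 0$ implies that $T_X/\sqrt{\alpha}$ is a complex structure on $W_X$ anti-commuting with $J$; together with $J$ and their product it generates a quaternionic structure on $W_X$, forcing $\dim W_X \equiv 0 \pmod 4$. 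A more delicate analysis of how this quaternionic structure varies with $X$ (using strictness together with the constant-type identity at several base vectors) rules out all $\dim M > 6$, leaving $\dim M = 6$; the identification $\alpha=S/30$ is then part of the previous paragraph.

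The main technical obstacle is the linear-algebra classification of skew-symmetric $J$-anti-commuting operators on the four-dimensional Hermitian space $W_X$ together with the passage from point-wise dependence to genuine point-wise constancy of $\alpha(X)$; once those are in hand the Einstein statement and the explicit value of $\alpha$ reduce to mechanical trace computations, $c_1=0$ is immediate from $\bar\nabla$-parallelism of $\Phi$, and the dimensional rigidity in the converse follows from the quaternionic reduction together with a continuity argument as $X$ varies.
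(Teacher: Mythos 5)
The paper does not actually prove this proposition: it is imported verbatim from Gray \cite{Gray2,Gray4} and Watanabe--Jinsuh \cite{wat1}, so there is no in-paper argument to compare yours against, and your sketch has to stand on its own. The pointwise part of your argument is correct and is essentially Gray's: $T_X=(\nabla_XJ)$ is skew-symmetric, anti-commutes with $J$, annihilates $\langle X,JX\rangle$ and maps into $W_X$; the skew $J$-anti-commuting endomorphisms of the $4$-dimensional space $W_X$ do form a $2$-parameter family each of which is a multiple of an orthogonal map; and the symmetry $\|T_XY\|^2=\|T_YX\|^2$ coming from the total skew-symmetry of $A$ makes $\alpha$ depend only on the point. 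Global constancy via $\bar\nabla(\nabla J)=0$ and $|\nabla J|^2=24\alpha$, and the trivialisation of the canonical bundle by the $\bar\nabla$-parallel $(3,0)$-form $A+iB$ for part (2), are both sound.

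Two steps, however, have genuine gaps. (i) For the Einstein statement you propose to trace $\langle R_{W,X}Y,Z\rangle-\langle R_{W,X}JY,JZ\rangle=\langle(\nabla_WJ)X,(\nabla_YJ)Z\rangle$ and then ``pin down the coefficients separately'' using $\langle R_{X,Y}X,Y\rangle-\langle R_{X,Y}JX,JY\rangle=\|(\nabla_XJ)Y\|^2$. But the second identity is the specialisation $(W,X,Y,Z)\mapsto(X,Y,X,Y)$ of the first, so it carries no new information: all traces of these two identities yield only the single relation $Ricc-Ricc^{*}=4\alpha g$. Determining $Ricc=\frac{S}{6}g$ and $Ricc^{*}=\frac{S}{30}g$ separately requires an independent input, e.g.\ Gray's identity $2g((\nabla^{2}_{W,X}J)Y,Z)=\sigma_{X,Y,Z}g(R_{W,JX}Y,Z)$ combined with the explicit constant-type formula for $\nabla^{2}J$ (the second formula of the lemma following this proposition), or a second Bianchi argument; as written your scheme cannot separate the two Ricci tensors. (ii) In the converse, the quaternionic structure generated by $J$ and $T_X/\sqrt{\alpha}$ on $W_X$ gives only $\dim W_X\equiv 0\pmod 4$, i.e.\ $\dim M\equiv 2\pmod 4$, which still permits dimensions $10,14,\dots$. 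The ``more delicate analysis'' you defer is precisely the hard part of Gray's theorem (one must play $T_X$, $T_Y$ and $T_{T_XY}$ against one another to reach a contradiction in dimension at least $10$), so $\dim M=6$ is asserted rather than proved. Neither gap is fatal --- both are closed in the cited references --- but as a self-contained proof the proposal is incomplete at exactly these two points.
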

The next lemma follows immediately.
\begin{lemma}
 For vector fields $W, X,Y$ and $Z$ we have
 \begin{eqnarray*}
g((\nabla_{W}J)X,(\nabla_{Y}J)Z)=\frac{S}{30}\{g(W,Y)g(X,Z)-g(W,Z)g(X,Y)\\
-g(W,JY)g(X,JZ)+g(W,JZ)g(X,JY)\},
\end{eqnarray*}
and
\begin{eqnarray*}
 g((\nabla_{W}\nabla_{Z}J)X,Y)=\frac{S}{30}\{g(W,Z)g(JX,Y)
 -g(W,X)g(JZ,Y)+g(W,Y)(JZ,X)\},
 \end{eqnarray*}
 also
 \begin{eqnarray*}
  \Sigma g(Je_{i},e_{j})R(e_{i},e_{j}X,Y)=-\frac{S}{15}g(Jx,Y),\\
\Sigma g((\nabla_{X}J)e_{i},e{j})R(e_{i},e_{j},Y,Z)=-\frac{S}{30}g((\nabla_{X}J)Y,Z),
  \end{eqnarray*}
where $\{e_{i}\}$ is a local orthonormal frame field on $M$.
\end{lemma}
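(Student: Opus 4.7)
The plan is to derive all four identities as polarizations and traces of the constant-type formula of Proposition \ref{6-dim}, combined with the symmetries of $\nabla J$ collected in Lemma \ref{formula} and the Gray curvature identities quoted in the preliminaries.

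For the first identity, I would view the four-tensor
\[
T(W,X;Y,Z) := g((\nabla_W J)X,(\nabla_Y J)Z)
\]
as a bilinear form on pairs. By Lemma \ref{formula}, $T$ is skew in $(W,X)$, skew in $(Y,Z)$, symmetric under the swap $(W,X)\leftrightarrow(Y,Z)$, and satisfies $T(JW,X;Y,Z) = -T(W,JX;Y,Z)$ because $J(\nabla_X J)Y = -(\nabla_X J)JY$. The constant-type condition of Proposition \ref{6-dim} fixes the diagonal value $T(W,X;W,X) = (S/30)\{|W|^2|X|^2 - g(W,X)^2 - g(W,JX)^2\}$, and a standard polarization in each pair (replace $W\to W+\lambda W'$ and $Y\to Y+\lambda Y'$ and read off the mixed coefficients) then recovers the stated bilinear expression; the symmetries above guarantee that this extension is unique.

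For the second identity, I would substitute the first identity, with $Z$ replaced by $JZ$ and $J^2=-\mathrm{Id}$, into the last formula of Lemma \ref{formula},
\[
2g((\nabla^2_{W,Z}J)X,Y) = -\sigma_{Z,X,Y}\, g((\nabla_W J)Z,(\nabla_X J)JY),
\]
and carry out the cyclic sum. After expanding the three cyclic terms, the six basic quadratics split into two groups: the $g(W,J\cdot)\,g(\cdot,\cdot)$ terms cancel pairwise across the cycle, while the remaining $g(W,\cdot)\,g(\cdot,J\cdot)$ terms collect with multiplicity two and reproduce the right-hand side of the claimed identity.

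For the third identity, I would use the Gray identity $R(W,X,Y,Z) - R(W,X,JY,JZ) = g((\nabla_W J)X,(\nabla_Y J)Z)$ with $W=e_i$, $X=Je_i$; since $(\nabla_{e_i}J)(Je_i) = -J((\nabla_{e_i}J)e_i) = 0$ by $(\nabla_X J)X = 0$, the right-hand side vanishes, and therefore $T(X,Y) := \sum_i R(e_i,Je_i,X,Y) = \sum_{i,j} g(Je_i,e_j)R(e_i,e_j,X,Y)$ is $J$-invariant in $(X,Y)$, hence proportional to $g(JX,Y)$. The characterization $\mathrm{Ric}^* = (S/30)g$ from Proposition \ref{6-dim} pins down the constant of proportionality as $-S/15$. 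For the fourth identity, I would combine the Gray formula $2g((\nabla^2_{W,X}J)Y,Z) = \sigma_{X,Y,Z}\, g(R_{W,JX}Y,Z)$ with the explicit expression for $(\nabla^2 J)$ obtained in the previous step; tracing over an orthonormal frame then converts $\sum_{i,j}g((\nabla_X J)e_i,e_j)R(e_i,e_j,Y,Z)$ into a scalar multiple of $g((\nabla_X J)Y,Z)$, and matching the coefficient against the explicit $(\nabla^2 J)$-formula yields the factor $-S/30$.

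The main obstacle I anticipate is bookkeeping: the skew-adjointness of $J$, the two sign conventions for $g(J\cdot,\cdot)$ versus $g(\cdot,J\cdot)$, and the cyclic sums all contribute signs that must be tracked carefully to land on the stated coefficients. Structurally, however, each identity reduces to a single substitution once the polarization in step one is in place; no new geometric input beyond Proposition \ref{6-dim} and the Gray curvature identities is required.
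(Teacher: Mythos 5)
The paper offers no proof of this lemma beyond the remark that it ``follows immediately'' from Proposition \ref{6-dim} and the Gray identities quoted in the preliminaries, and your derivation --- polarization of the constant-type formula (where the extra symmetry $T(JW,JX;Y,Z)=-T(W,X;Y,Z)$ really is what kills the residual $\Lambda^{4}$ ambiguity in real dimension $6$), substitution into the second-derivative identity with the expected pairwise cancellations, and tracing against the curvature identities --- is exactly that standard route, so the two approaches coincide. One small repair is needed in your third identity: $J$-invariance of the $2$-form $\sum_i R(e_i,Je_i,\cdot,\cdot)$ does not by itself force proportionality to $g(J\cdot,\cdot)$ (a generic $(1,1)$-form is not a multiple of the K\"ahler form), so rather than using $Ricc^{*}=\frac{S}{30}g$ merely to ``pin down a constant,'' observe that $\sum_i R(e_i,Je_i,X,Y)$ \emph{is}, up to a factor of $2$ and precomposition with $J$, the full star-Ricci bilinear form, whence Proposition \ref{6-dim}(3) yields the identity directly.
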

\section{Complex and invariant totally umbilic foliation}
Let $f:M^{n}\longrightarrow\mathbb{Q}^{n+p}_{c}$ be an isometric immersion from a complete oriented Riemannian manifold $M$ into the space form $\mathbb{Q}$ of codimension $p$ and $\alpha$ be its second fundamental form. For smooth normal section $\eta\in\Gamma(TM)^{\perp}$ and $x\in M$  the totally umbilic distribution (with singularity) associated to $\eta$ is defined by
\begin{align*}
\Delta_{x}=\{X\in T_{x}M| \, \alpha(X,Y)=<X,Y>\eta\qquad\forall Y\in T_{x}M\}.
\end{align*}
This distribution is smooth because $\Delta_{x}=\bigcap Ker\alpha_{i}(x)$ where
\begin{align*}
\alpha_{i}=\alpha(\cdot, X_{i})-<\cdot, X_{i}>\eta
\end{align*}
is a smooth 1-form for the local frame fields $\{X_{i}\}$.
Therefore $\Delta$ is a smooth
distribution with singularity (may not be of constant dimension) because these 1-forms may be linearly dependent. Put $\nu(x)=dim\Delta_{x}$, then $\nu$ is semi-continuous and  there is exist an open $U$ such that $\nu$  is
constant on $U$. This distribution is involutive because by the Codazzi equation we have
\begin{align*}
(\nabla_{X}^{\perp}\alpha)(Y,Z)=(\nabla_{Y}^{\perp}\alpha)(X,Z),
\end{align*}
hence
\begin{align*}
\nabla_{X}^{\perp}\alpha(Y,Z)-\alpha(\nabla_{X}Y,Z)-\alpha(Y,\nabla_{X}Z)=\nabla_{Y}^{\perp}\alpha(X,Z)-\alpha(\nabla_{Y}X,Z)-\alpha(X,\nabla_{Y}Z),
\end{align*}
and if $X,Y\in\Delta_{x}$ then for all $Z\in \Delta_{x}^{\perp}$ we have
\begin{align*}
\alpha(\nabla_{X}Y,Z)+<\nabla_{X}Y,Z>\eta=\alpha(\nabla_{Y}X,Z)+<\nabla_{Y}X,Z>\eta,
\end{align*}
and for $Z\in \Delta_{x}$ there is nothing to prove.
On the other hand, $[X,Y]=\nabla_{X}Y-\nabla_{Y}X$,  therefore
\begin{align*}
\alpha([X,Y],Z)=<[X,Y],Z>\eta.
\end{align*}
Put
\begin{align*}
\Delta_{x}^{'}=\{X\in T_{x}M| \forall Y\in T_{x}M\quad\alpha(X,JY)+\alpha(JX,Y)=0\},
\end{align*}
where $J$ is the almost complex structure. Like the previous case, this distribution is smooth but may not be involutive. When $M$  is almost Hermitian, the complexification of totally umbilic distribution at each point is defined by
\begin{align*}
\Delta_{x}\cap J\Delta{x}=\Delta_{x}\cap\Delta_{x}^{'}.
\end{align*}
This distribution is smooth but may be not involutive. When $M$ is nearly K\"ahler with torsion $T$ of  intrinsic Hermitian connection, we define
\begin{align*}
\Delta_{x}^{''}=\{X\in T_{x}M|  \forall Y,Z\in T_{x}M\quad\alpha(T(X,Y),Z)+\alpha(T(X,Z),Y)=0\}.
\end{align*}
The corresponding distribution is smooth but not involutive.\\
\begin{definition}
Let $f:M^{2n}\longrightarrow\mathbb{Q}^{2n+p}$ be an isometric immersion from a complete nearly K\"ahler manifold $M$ into a space form $\mathbb{Q}$. We denote by $\alpha$ the second fundamental form of $f$ and by $T$ the torsion of intrinsic Hermitian connection on $M$ and define the complex and invariant totally umbilic distribution
at each point by
\begin{align*}
D_{x}^{\eta}=D_{x}=\Delta_{x}\cap\Delta_{x}^{'}\cap\Delta_{x}^{''},
\end{align*}
which can be easily seen to be equal to
\begin{align*}
D_{x}=\{X\in T_{x}M| \forall
Y\in T_{x}M\quad X\in\Delta_{x},JX\in\Delta_{x},T(X,Y)\in\Delta_{x}\}
\end{align*}
\end{definition}

\begin{lemma}\label{main lemma}
Let $f:M^{2n}\longrightarrow\mathbb{Q}^{2n+p}_{c}$ be an isometric immersion from a complete and strictly nearly K\"ahler manifold into a space form of curvature $c$ and codimension $p$. Then the complex and invariant totally umbilic distribution is smooth and involutive and it defines a foliation with singularity. The leaf of this foliation may not be complete even if $M$ is compete.
\end{lemma}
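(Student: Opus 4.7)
I would split the proof into smoothness and involutivity, in both cases working on the open dense set where $\dim D_x$ is locally constant.

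\emph{Smoothness.} I mimic the argument given for $\Delta$ in the paragraph above. The defining relations of $\Delta'$ and $\Delta''$ are tensorial and linear in the tested vector, so if $\{X_i\}$ is a local frame of $TM$ then $\Delta'_x = \bigcap_i \ker \beta_i(x)$ with $\beta_i(X) = \alpha(X, JX_i) + \alpha(JX, X_i)$, and $\Delta''_x = \bigcap_{i,j}\ker \gamma_{ij}(x)$ with $\gamma_{ij}(X) = \alpha(T(X,X_i),X_j) + \alpha(T(X,X_j),X_i)$. Each $\beta_i$ and $\gamma_{ij}$ is a smooth $TM^{\perp}$-valued $1$-form, so $D = \Delta \cap \Delta' \cap \Delta''$ is the joint kernel of a smooth finite family of bundle maps and is therefore smooth. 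Its pointwise dimension $\nu(x)$ is upper semi-continuous, so it is locally constant on an open dense subset, on which $D$ is a genuine subbundle of $TM$; this is the regular locus of the foliation, outside of which we admit singularities.

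\emph{Involutivity.} Let $X, Y$ be local sections of $D$ on the regular set. The assertion $[X, Y] \in D$ unpacks into the three pointwise statements that $[X, Y]$, $J[X, Y]$, and $T([X, Y], Z)$ (for every $Z$) all lie in $\Delta$. The first is exactly the Codazzi computation already displayed just above the definition of $D$. For the second, starting from $J\nabla_X Y = \nabla_X(JY) - (\nabla_X J)Y$ and using the antisymmetry $(\nabla_X J)Y + (\nabla_Y J)X = 0$ from Lemma \ref{formula}, I rewrite
$$J[X, Y] = \nabla_X(JY) - \nabla_Y(JX) - 2(\nabla_X J)Y.$$
Because $JX, JY \in \Delta$, the same Codazzi-subtraction as in the first step, applied now to the pairs $(X, JY)$ and $(Y, JX)$, forces $\nabla_X(JY) - \nabla_Y(JX)$ to satisfy the umbilic identity with $\eta$; and the remaining term is, up to sign and a factor of $J$, the torsion $T(X, Y)$ by Lemma \ref{nijenhus}, which lies in $\Delta$ by the third defining condition of $D$. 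That $J T(X,Y) \in \Delta$ as well uses $(\nabla_{JX}J)Y = -J(\nabla_X J)Y$ from Lemma \ref{formula} to rewrite $JT(X,Y)$ as $\pm T(JX, Y)$ and then applies the third condition to $JX \in D$. For the third item, I exploit the parallelism $\bar\nabla T = 0$: writing out $(\bar\nabla_X T)(Y, Z) = 0$ and the swap in $X, Y$, and substituting $\bar\nabla_X Y - \bar\nabla_Y X = [X, Y] + T(X, Y)$, one expresses $T([X, Y], Z)$ as a linear combination of $T(X, \cdot), T(Y, \cdot)$ and $T(T(X,Y), \cdot)$, each of which takes values in $\Delta$ by the hypothesis on $X$ and $Y$.

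\emph{Main obstacle.} The technically delicate step is the $J$-equivariance of the umbilic property under Lie bracket. It intertwines Codazzi with the nearly K\"ahler torsion, and it is the only place in the proof where all three defining conditions of $D$ are used simultaneously, which is precisely why $D$ is defined as a triple intersection rather than any single $\Delta$. The final sentence of the lemma, that leaves may fail to be complete when $M$ is, is a standard feature of singular foliations: along the singular locus (where $\nu$ jumps) the rank of $D$ drops, and a leaf through a nearby regular point can approach that locus in finite intrinsic time even though $M$ itself is complete.
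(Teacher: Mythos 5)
Your decomposition of involutivity into the three membership conditions ($[X,Y]\in\Delta$, $J[X,Y]\in\Delta$, $T([X,Y],\cdot)\subset\Delta$) is the right skeleton and agrees with the paper, and your smoothness argument and closing remark on incomplete leaves are fine. But two of the three steps contain genuine gaps, and your proposal never invokes the Gauss equation of the immersion, which is where the paper's proof actually does its work. First, the claim that ``the same Codazzi-subtraction applied to the pairs $(X,JY)$ and $(Y,JX)$ forces $\nabla_X(JY)-\nabla_Y(JX)$ to satisfy the umbilic identity'' is false as stated: Codazzi controls the Lie brackets $[X,JY]=\nabla_X(JY)-\nabla_{JY}X$ and $[Y,JX]=\nabla_Y(JX)-\nabla_{JX}Y$, so subtracting the two identities leaves the uncontrolled remainder $\nabla_{JX}Y-\nabla_{JY}X$, which is not a bracket of sections of $\Delta$ and does not drop out. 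The paper instead reaches $J[X,Y]\in\Delta$ through the Nijenhuis identity $4N(X,Y)=[X,Y]+J[JX,Y]+J[X,JY]-[JX,JY]$ combined with a Florit--Zheng type computation. Your companion claim that $JT(X,Y)\in\Delta$ because $JX\in D$ is circular: by the characterization of $D$, membership of $JX$ in $D$ requires precisely $T(JX,\cdot)=\mp JT(X,\cdot)\subset\Delta$, which is what you are trying to establish.

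Second, and more seriously, in the third step the identity obtained by antisymmetrizing $\bar{\nabla}T=0$ expresses $T([X,Y],Z)$ in terms of $\bar{\nabla}_X(T(Y,Z))$, $\bar{\nabla}_Y(T(X,Z))$ and $T(T(X,Y),Z)$, and none of these is controlled ``by the hypothesis on $X$ and $Y$'': the distribution $\Delta$ is not $\bar{\nabla}$-parallel, so $\bar{\nabla}_X(T(Y,Z))$ need not lie in $\Delta$ even though $T(Y,Z)$ does, and $T(T(X,Y),Z)\in\Delta$ would require $T(X,Y)\in D$, not merely $T(X,Y)\in\Delta$. That last point is exactly the content of the paper's Lemma \ref{tensor} (the vanishing of the tensor $C_X(Z)=T(X,Z)^{\perp}$), whose proof is the analytic heart of the argument: it combines $\bar{\nabla}T=0$ with Gray's second-derivative formula $2g((\nabla^{2}_{W,X}J)Y,Z)=\sigma_{X,Y,Z}\,g(R_{W,JX}Y,Z)$ and the Gauss equation of the immersion into the space form, using the umbilicity $\alpha(X,\cdot)=\langle X,\cdot\rangle\eta$ to kill the curvature terms. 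Without this curvature input the involutivity of $D$ is not a formal consequence of Codazzi plus torsion-parallelism, so your outline as written cannot be completed.
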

\begin{proof}
 By definition and smoothness of $\Delta_{x},\Delta_{x}^{'},\Delta_{x}^{''}$ we conclude that $D_{x}$ is smooth. Therefore $D_{x}$ defines a singular distribution on $M$. We show that this distribution is involutive. Let $X,Y\in D_{x}$ by Lemma \ref{nijenhus} and the fact that $D_{x}$ is invariant under $T,J$ in $\Delta$ we have $N(X,Y)\in D_{x}$ where $N$ is the Nijenhuis tensor of almost complex structure $J$. By definition of $N$ and since $\Delta_{x}$ is involutive we have
\begin{align*}
N(X,Y)-[X,Y]+[JX,JY]=J[JX,Y]+J[X,JY]\in\Delta_{x}.
\end{align*}
For each $Z\in \Delta_{x}^{\perp}$ we have $<J[X,Y],-J[JX,JY],Z>=0$, therefore
\begin{align*}
 \alpha(J[X,Y],Z)=\alpha(J[JX,JY],Z),
 \end{align*}
and a computation like that of \cite{Flo1,Flo3} and using fact that $N$ is of skew-symmetric implies that $J[X,Y]\in\Delta_{x}$.\\
To show that the distribution $D$ is involutive we show that for all $Z\in T_{x}M$, $T([X,Y],Z)\in\Delta_{x}$. For this purpose we define the following tensor
 \begin{align*}
&C_{X}:D_{x}^{\perp}\longrightarrow D_{x}^{\perp}\\
&C_{X}(Z)=T(X,Z)^{\perp}.
\end{align*}
\end{proof}
\begin{lemma}\label{tensor}
The tensor $C_{X}$ vanishes for all $X\in D_{x}$.
\end{lemma}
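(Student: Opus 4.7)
The plan is to show that, for $X\in D_x$ and $Z\in D_x^{\perp}$, the vector $T(X,Z)$ in fact lies in $D_x$; since $C_X(Z)$ is by definition the projection of $T(X,Z)$ onto $D_x^{\perp}$, this immediately yields $C_X(Z)=0$. Using the equivalent characterization of $D_x$ given just before Lemma \ref{main lemma}, I need to verify three things: (i) $T(X,Z)\in\Delta_x$; (ii) $JT(X,Z)\in\Delta_x$; and (iii) $T(T(X,Z),W)\in\Delta_x$ for every $W\in T_xM$.

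For (i) the defining property of $D_x$ already guarantees $T(X,\cdot)\in\Delta_x$ whenever $X\in D_x$, so the case $Y=Z$ is immediate. For (ii) I would use the explicit form $T(X,Y)=-J(\nabla_XJ)Y$ together with $(\nabla_XJ)\circ J=-J\circ(\nabla_XJ)$ (a consequence of $\nabla_XJ^2=0$), both contained in Lemma \ref{formula}, to derive the identity $JT(X,Z)=-T(X,JZ)$; applying (i) with $Z$ replaced by $JZ$ then settles (ii).

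The main work is (iii). Since $\bar\nabla T=0$ on a nearly K\"ahler manifold, the first Bianchi identity for the canonical Hermitian connection $\bar\nabla$ specialises to
\[
\sigma_{X,Z,W}\,T(T(X,Z),W)\;=\;\sigma_{X,Z,W}\,\bar R(X,Z)W.
\]
Solving this cyclic identity for $T(T(X,Z),W)$ presents it as the two further torsion-iterates $T(T(Z,W),X)$ and $T(T(W,X),Z)$, plus a cyclic sum of three $\bar R$-terms. The first iterate equals $-T(X,T(Z,W))$, which lies in $\Delta_x$ at once from $X\in\Delta_x\cap\Delta''_x$; the second is brought to the same form by the total skew-symmetry of $T$ and a brief bootstrap on the same argument. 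For the remaining curvature contribution I would combine Gray's formulas from Section 2, which express $\bar R$ in terms of the Riemann tensor $R$ of $\nabla$ and the tensor $\nabla J$, with the Gauss equation for $f$ and the umbilicity relations $\alpha(X,\cdot)=\langle X,\cdot\rangle\eta$ and $\alpha(JX,\cdot)=\langle JX,\cdot\rangle\eta$, to force the resulting expression to take its values in $\Delta_x$.

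The main obstacle will be exactly this curvature reduction in (iii): one must exploit all three defining properties $X\in\Delta_x\cap\Delta'_x\cap\Delta''_x$ simultaneously, together with the full set of nearly K\"ahler curvature identities collected in Section 2 and the Gauss--Codazzi equations for $f$, in order to push the cyclic sum of $\bar R$-terms into $\Delta_x$. Once this is in place, the three bullet conditions (i)--(iii) are all verified and Lemma \ref{tensor} follows.
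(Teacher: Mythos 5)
Your overall strategy is the same as the paper's: show that $T(X,Z)\in D_{x}$ for $X\in D_{x}$, $Z\in D_{x}^{\perp}$, so that the $D_{x}^{\perp}$-projection $C_{X}(Z)=T(X,Z)^{\perp}$ vanishes, and reduce the iterated-torsion condition to curvature and then to the second fundamental form via the Gauss equation. Steps (i) and (ii) are fine. The gap is in step (iii). The first Bianchi identity for $\bar{\nabla}$ (with $\bar{\nabla}T=0$) only controls the cyclic sum $\sigma_{X,Z,W}\,T(T(X,Z),W)$. One of its three terms, $T(T(Z,W),X)=-T(X,T(Z,W))$, is indeed controlled by $X\in D_{x}$. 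But the remaining term $T(T(W,X),Z)=-T(T(X,W),Z)$ is exactly the unknown quantity with $Z$ and $W$ exchanged: the Bianchi identity therefore determines only the combination $T(T(X,Z),W)-T(T(X,W),Z)$ modulo curvature, i.e.\ one equation for two unknowns, and iterating it gives back the same equation. Your proposed fix, ``total skew-symmetry of $T$ and a brief bootstrap,'' does not close this: total skew-symmetry of $\langle T(\cdot,\cdot),\cdot\rangle$ lets you permute arguments inside metric pairings, but membership in $\Delta_{x}$ is a statement about $\alpha(T(T(X,W),Z),\cdot)$, and $\alpha$ has no such symmetry; and you cannot invoke $T(X,W)\in D_{x}$ (which would make $T(T(X,W),Z)\in\Delta_{x}$ immediate), because that is precisely the statement being proved, so the bootstrap is circular.

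The paper gets the missing symmetric part from a strictly stronger input: it expands $\bar{\nabla}T=0$ in terms of the Levi-Civita connection to isolate each individual term $T(T(\cdot,\cdot),\cdot)$ against $(\nabla_{A}T)(X,Z)$, identifies $\nabla T$ with $\nabla^{2}J$, and then applies Gray's second-derivative formula $2g((\nabla^{2}_{W,X}J)Y,Z)=\sigma_{X,Y,Z}\,g(R_{W,JX}Y,Z)$ from Lemma \ref{formula} to convert everything to Riemannian curvature; the Gauss equation and the umbilicity relations $\alpha(X,\cdot)=\langle X,\cdot\rangle\eta$, $\alpha(JX,\cdot)=\langle JX,\cdot\rangle\eta$ then kill the result, with the auxiliary identity $\langle\eta,\alpha(JZ,W)-\alpha(Z,JW)\rangle=0$ derived along the way as a necessary intermediate step. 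To repair your argument you would need to replace the Bianchi identity by this pointwise expansion of $\bar{\nabla}T=0$ (or some other independent control of the $(Z,W)$-symmetric part); as written, step (iii) does not go through.
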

\begin{proof}
Let $Y\in D_{x}$ be such that $<X,Y>=1$ and $W\in \Delta_{x}^{\perp}$. Note that $T$ is $\bar{\nabla}$-parallel and  we have
\begin{align*}
&0=(\nabla_{X}T)(Y,Z)+T(X,T(Y,Z)-T(T(X,Y),Z)-T(Y,T(X,Z)),
\end{align*}
and since $X,Y\in D_{x}$, $<T(X,T(Y,Z),W>=<T(Y,T(X,Z),W>=0$, thus using Gray formula at the end of Lemma \ref{formula} we conclude that
\begin{align*}
<T(T(X,Y),Z),W>&=<T(X,Y),T(W,Z)>=<R_{X,JY}JW,Z>\\&+<R_{X,JY}W,JZ>,
\end{align*}
and if we use described Riemannian curvature by the second fundamental form
\begin{align*}
<R_{X,Y}Z,W>&=c\{<X,W><Y,Z>-<X,Z><Y,W>\}\\&+<\alpha(X,W),\alpha(Y,Z)>-<\alpha(X,Z),\alpha(Y,W)>
\end{align*}
we get $<T(T(X,Y),Z,W>=0$ (because  $X,Y\in D_{x}$). Therefore by Gray formula (8) we have
\begin{align*}
&0=\sigma_{Y,Z.W}<R_{X,JY}Z,W>=<\alpha(X,Y),\alpha(JZ,W)-\alpha(Z,JW)>\\
&\Longrightarrow <\eta,\alpha(JZ,W)-\alpha(Z,JW)>=0.
\end{align*}
\begin{remark}
Note that for $A\in T_xM, X\in D_{x}$, $Z\in D_{x}^{\perp}$ and $W\in\Delta_{x}^{\perp}$,
\begin{align*}
<(\nabla_{A}T)(X,Z),W>=<(\nabla_{A,X}^{2}J)JZ,W>,
\end{align*}
\end{remark}
Now to show that the tensor $C$ vanishes, let $A\in T_{x}M$, it is sufficient to show that for all $W\in \Delta_{x}^{\perp}$, $<T(T(X,Z),A),W>$ vanishes. But
\begin{align*}
<T(T(X,Z),A),W>&=<(\nabla_{A}T)(Z,X),W>+<T(X,T(A,Z)),W>\\&+<T(T(A,X),Z),W>
\end{align*}
and similar to the previous case, the last two terms on the right hand side are zero. Therefore
\begin{align*}
&<T(T(X,Z),A),W>=<(\nabla_{A}T)(X,Z),W>=\frac{1}{2}\sigma_{X,Z,W}<R_{A,JX}Z.W>\\
&=<\alpha(A,W),\alpha(JX,Z)>+<\alpha(A,Z),\alpha(JX,W)>\\&+\frac{1}{2}<\alpha(A,X),\alpha(JZ,W)-\alpha(JW,Z)>,
\end{align*}
when $JX\in D_{x}$ ,$Z\in D_{x}^{\perp}$ and $W\in\Delta_{x}^{\perp}$ the first two terms on the right hand side are also zero because $X\in D_{x}$ and $\alpha(X,A)=<X,A>\eta$ therefore the third term in the above equation vanishes. This means that $T(T(X,Z),A)\in\Delta_{x}$ for all $A\in T_{x}M$, therefore $T(X,Z)\in D_{x}$ and the proof is complete.
\end{proof}
\paragraph*{}
To proof that the distribution is involutive it is sufficient to show that for all  $X,Y\in D_{x}$ and $Z\in T_{x}M$, $T([X,Y],Z)\in\Delta_{x}$. The case $Z\in D_{x}$ is trivial therefore we may suppose that $Z\in D_{x}^{\perp}$. Also we may assume that $W\in\Delta_{x}^{\perp}$ then
\begin{align*}
&0=<\bar{\nabla}_{X}T(Y,Z),W>=<(\nabla_{X}T)(X,Y)>+<T(X,T(Y,Z)),W>,
\end{align*}
where the last term vanishes since $X\in D_{x}$ and $T(X,T(Y,Z))\in\Delta_{x}$ is equal to zero therefore
\begin{align*}
0=<\nabla_{X}T(Y,Z),W>-<T(\nabla_{X}Y,Z),W>-<T(Y,\nabla_{X}Z),W>
\end{align*}
the last term of this relation is also zero since the $Y\in D_{x}$ and
\begin{align*}
<T(\nabla_{X}Y,Z),W>&=<\nabla_{X}T(Y,Z),W>=<T(Y,Z),\nabla_{X}W>\\
&=-<T(Y,\nabla_{X}W),Z>=0.
\end{align*}
The last term is equal zero by Lemma \ref{tensor}  therefore
distribution is involutive. We know that the dimension of $\Delta$ on integral curves associated with the vector fields generated this distribution is constant \cite{Daj}. With a similar argument and using \cite{Ste} we observe that $D$ is constant on
integral curves associated with the generating vector fields of $D$
therefore by Sussman-Stefan theorem \cite{Mich,Sus}
$D$ is a foliation with singularity and has maximal locally
integral manifolds.
\begin{lemma}\label{com lemma}
Let $f:M^{2n}\longrightarrow\mathbb{Q}_{c}^{2n+p}$ be an isometric immersion from a complete and strictly nearly K\"ahler manifold into a space form with curvature $c$ and codimension $p$. Each leafs of complex and invariant totally umbilic foliation by  torsion of the intrinsic Hermitian connection is 6-dimensional homogeneous nearly K\"ahler manifolds.
\end{lemma}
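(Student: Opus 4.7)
My approach is to establish that each regular leaf $L$ of the distribution $D$ is a strict, six-dimensional, homogeneous nearly K\"ahler manifold, in three stages. \emph{First (nearly K\"ahler structure):} since $D$ is $J$-invariant by construction, $J$ restricts to an almost complex structure $J_L$ on $L$ compatible with the induced metric $g_L$, and the $D$-umbilicity of $f$ along $\eta$ yields a Gauss-type formula $\nabla_X Y = \nabla^L_X Y + h^L(X,Y)$ for $X,Y\in TL$, where $h^L$ is the second fundamental form of $L\hookrightarrow M$. Because $D$ is $J$-invariant, tangential projection onto $TL$ commutes with $J$; projecting the identity $(\nabla_X J)X=0$ tangentially then gives $(\nabla^L_X J_L)X=0$, so $(L,g_L,J_L)$ is nearly K\"ahler.

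\emph{Second (dimension six).} The $T$-invariance of $D$ built into its definition means that $T$ restricts to a nontrivial torsion on $L$, so $L$ is strict. To pin down the dimension I would prove the constant-type identity $\|(\nabla^L_X J_L)Y\|^2 = \kappa(\|X\|^2\|Y\|^2 - \langle X,Y\rangle^2 - \langle J_L X,Y\rangle^2)$ by restricting the Gray identity for $\langle(\nabla_W J)X,(\nabla_Y J)Z\rangle$ (the first formula of the last lemma of Section~2) to vectors in $D$, combining it with Lemma~\ref{formula}, and verifying that the scalar $\kappa$ so produced is independent of the point on $L$. The converse direction of Proposition~\ref{6-dim} then forces $\dim L = 6$.

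\emph{Third (homogeneity).} On a six-dimensional strict nearly K\"ahler manifold the torsion of the intrinsic Hermitian connection is automatically parallel, so by Ambrose-Singer it suffices to establish $\bar{\nabla}^L \bar{R}^L = 0$. I would derive this from the Gauss and Codazzi equations for the umbilic inclusion $L \hookrightarrow \mathbb{Q}_c^{2n+p}$: the second fundamental form of $L$ in $\mathbb{Q}_c^{2n+p}$ splits into the ambient umbilic part $\langle\cdot,\cdot\rangle\eta$ and a part tangent to $M$, and combining Codazzi with the Einstein condition for six-dimensional strict nearly K\"ahler (Proposition~\ref{6-dim}) and the curvature identities of Section~2 reduces $\bar{\nabla}^L \bar{R}^L$ to expressions in $\bar{\nabla}^L T^L$, which vanishes, and $\bar{\nabla}^L \eta$, which vanishes by the umbilicity analysis used in the proof of Lemma~\ref{tensor}. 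Local homogeneity then follows, and Butruile's classification \cite{But} identifies every simply connected six-dimensional homogeneous strict nearly K\"ahler manifold with one of $S^6$, $S^3\times S^3$, $\mathbb{C}P^3$, or $\mathbb{F}^3$, upgrading this to global homogeneity on the universal cover of $L$.

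The main obstacle I anticipate is Stage~3: proving $\bar{\nabla}^L \eta = 0$ and pushing through the curvature computation requires delicate coordination between Codazzi and the nearly K\"ahler structure equations to annihilate the off-diagonal terms, and care is needed at the singular points of the foliation. Because the leaves need not be complete (Lemma~\ref{main lemma}), global homogeneity can only be recovered after passing to a simply connected cover and invoking Butruile's classification at the end.
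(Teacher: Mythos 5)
Your overall strategy matches the paper's: the leaf inherits a nearly K\"ahler structure from the $J$- and $T$-invariance of $D$, is shown to have constant type, is forced to be six-dimensional by the converse clause of Proposition \ref{6-dim}, and is locally homogeneous because it is an Ambrose--Singer manifold. Two points need attention. First, in your second stage the formula you cite (the first identity of the unnumbered lemma closing Section 2, $g((\nabla_WJ)X,(\nabla_YJ)Z)=\tfrac{S}{30}\{\cdots\}$) is itself a consequence of Proposition \ref{6-dim} and therefore already presupposes $\dim =6$; invoking it to prove $\dim L=6$ is circular. The paper instead obtains constant type from the dimension-free Gray curvature identity $\|(\nabla_XJ)Y\|^2=-\langle R_{X,Y}X,Y\rangle+\langle R_{X,Y}JX,JY\rangle$ combined with the Gauss equation and the umbilicity $\alpha(X,Y)=\langle X,Y\rangle\eta$ on $D$, which produces the type constant $c+\|\eta\|^2$ directly; your argument must be rerouted through that identity. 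Second, your Codazzi-based attack on $\bar\nabla^L\bar R^L=0$ is much heavier than what the paper does: the same Gauss-equation computation shows $\langle R(X_i,X_j)X_k,X_l\rangle=\|\eta\|^2\{\delta^i_l\delta^j_k-\delta^i_k\delta^j_l\}$ on $D$, i.e.\ the leaf has constant sectional curvature (with $\|\eta\|$ constant since the leaf is Einstein with $\|\eta\|^2=S/30$), and $\bar\nabla\bar R=0$ then follows by the computation of \cite{Hei2} without any separate analysis of $\bar\nabla^L\eta$. Your closing appeal to Butruille's classification is not needed for this lemma, which asserts only (local) homogeneity.
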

\begin{proof}
Let $\theta(x)=dimD_{x}$ then by Lemma \ref{main lemma} and Sussman-Stefan theorem concluded that semi-continuous function $\theta(x)$ is locally constant. It means that for all $x\in M$ there exists an open
subset $U\subseteq M$ containing $x$ such that $\theta(x)$ on $U$ is
constant therefore $D_{x}$ on $U$ is integrable distribution. The maximal
integral manifold $N$ of this distribution  is nearly K\"aler manifold
with induced almost complex structure and inherited metric because $D$
is invariant under $J,T$. We assert that $N$
has constant type $c+<\eta,\eta>$. By the Gray formula 8 in
Lemma \ref{formula} for all $X,Y\in\chi(N)$
\begin{align*}
\|(\nabla_{X}J)Y\|^{2}=&-<R_{X,Y}X,Y>+<R_{X,Y}JX,JY>\\
&=<\alpha(X,Y),\alpha((Y,X)>-<\alpha(X,X),\alpha(Y,Y)>\\&-<\alpha(X,JY),\alpha(Y,JX)>+<\alpha(X,JX),\alpha(Y,JY)\\
&=\|\eta\|^{2}(-<X,Y>^{2}+<X,X><Y,Y>\\&+<X,JY><Y,JX>)\\
&=\|\eta\|^{2}(\|X\|^{2}\|Y\|^{2}-<X,Y>^{2}-<X,Y>^{2}),
\end{align*}
hence by Proposition \ref{6-dim} $N$ is 6-dimensional manifold and $\|\eta\|^{2}=\frac{S}{30}$ where $S$ be scalar curvature of $N$. Also because $N$ is Einstein $\eta$ is of constant length. By the definition of the tangent bundle $TN$ at each point we have $H=\eta$ where $H$ is mean curvature vector field of $N$ as a submanifold of $\mathbb{Q}$.\\
Noted that the tangent space of this leaf at each point $x$ is $D_{x}$ therefore
\begin{align*}
<R(X_{i},X_{j})X_{k},X_{l}>&=<\alpha(X_{i},X_{l}),\alpha(X_{j},X_{k})>-<\alpha(X_{i},X_{k}),\alpha(X_{j},X_{l})>\\
&=\|\eta\|^{2}\{\delta^{i}_{l}\delta^{j}_{k}-\delta^{i}_{k}\delta^{j}_{l}\}
\end{align*}
and a computation like that of \cite{Hei2} implies that $\bar{\nabla}\bar{R}=0$. Therefore by the above remark each leaf is an Ambrose-Singer manifold (locally homogeneous).
\end{proof}
\paragraph*{}
Since the  Nagy decomposition to homogeneous 3-symmetric nearly K\"ahler, twistor space on positive K\"ahler quaternion and 6-dimension nearly K\"ahler manifold factors are unique up to homothety the  proof of theorem\ref{main theorem} is complete.
\begin{proposition}
 In Nagy decomposition, we have 6-dimensional factor if and only if there exist $0 \neq\eta\in T^{\perp}_{f}M$ such that the $D^{\eta}$ is non-zero distribution.
\end{proposition}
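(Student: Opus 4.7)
The plan is to derive both implications as direct corollaries of Theorem \ref{main theorem} and Lemma \ref{com lemma}, without needing additional technical work. The proposition is essentially a clean restatement of the bijective correspondence between 6-dimensional factors of the Nagy decomposition and leaves of the complex and invariant totally umbilic foliation, so the proof amounts to carefully assembling what has already been established.

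For the \emph{only if} direction, I would start by assuming the Nagy decomposition of $M$ contains a 6-dimensional strict nearly K\"ahler factor $N^{6}$. By Theorem \ref{main theorem}, this factor coincides with a leaf of the complex and invariant totally umbilic foliation, which by construction is the foliation obtained by integrating $D^{\eta}$ for some normal section $\eta \in \Gamma(T^{\perp}_{f}M)$. Since the leaf has dimension $6$, the distribution $D^{\eta}$ has rank $6$ along $N^{6}$ and is in particular non-zero. It remains to verify that $\eta$ itself is non-zero: by the explicit computation performed in the proof of Lemma \ref{com lemma}, the constant of proportionality of the umbilic second fundamental form along the leaf satisfies $\|\eta\|^{2} = S/30$, where $S$ is the scalar curvature of $N^{6}$. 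Since $N^{6}$ is strictly nearly K\"ahler, Proposition \ref{6-dim} gives that it is Einstein with positive scalar curvature, so $S>0$ and therefore $\eta\neq 0$.

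For the \emph{if} direction, suppose there exists $0\neq\eta\in T^{\perp}_{f}M$ such that $D^{\eta}$ is a non-zero distribution. By Lemma \ref{com lemma} applied to this $\eta$, every maximal integral leaf of $D^{\eta}$ is a $6$-dimensional homogeneous strict nearly K\"ahler submanifold of $M$. Regarded intrinsically, each such leaf is itself a strict $6$-dimensional nearly K\"ahler manifold, and by the uniqueness (up to homothety) of the Nagy decomposition into homogeneous $3$-symmetric spaces, twistor spaces over positive quaternionic K\"ahler manifolds, and $6$-dimensional strict nearly K\"ahler factors, such a submanifold can only correspond to a $6$-dimensional factor of the decomposition of $M$. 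This produces the required $6$-dimensional factor.

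The main potential obstacle is a definitional one rather than a computational one: one must check that the $\eta$ naturally produced in the \emph{only if} direction really lies in $T^{\perp}_{f}M$ and indeed coincides with the mean curvature vector of the corresponding leaf in $\mathbb{Q}_{c}$. Both points are already encoded in the totally umbilic condition $\alpha(X,Y)=\langle X,Y\rangle\eta$, which forces $\eta$ to take values in the normal bundle of $f$, and in the identification $H=\eta$ carried out in the proof of Lemma \ref{com lemma}. No new estimates or curvature computations are required, and the proposition therefore follows by combining Theorem \ref{main theorem}, Lemma \ref{com lemma}, and Proposition \ref{6-dim}.
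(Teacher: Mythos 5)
Your \emph{if} direction is essentially the paper's: a non-zero $D^{\eta}$ has leaves, Lemma \ref{com lemma} makes them $6$-dimensional homogeneous nearly K\"ahler submanifolds, and Theorem \ref{main theorem} identifies each leaf with a $6$-dimensional factor of the Nagy decomposition. That half is fine.

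The gap is in the \emph{only if} direction. Theorem \ref{main theorem} asserts that every leaf of the foliation coincides with a $6$-dimensional Nagy factor; it does not assert the converse, namely that every $6$-dimensional factor $N$ arises as a leaf of $D^{\eta}$ for some non-zero normal section $\eta$. Your opening step, that the factor ``coincides with a leaf of the foliation, which by construction is obtained by integrating $D^{\eta}$ for some $\eta$,'' assumes exactly what must be proved: a priori every $D^{\eta}$ could vanish even though a $6$-dimensional factor is present, because nothing established so far says that $N$ is an umbilic submanifold of $\mathbb{Q}_{c}$. This is why the paper's converse does real work: it takes the second fundamental form $\beta$ of $N$ inside $M$, uses the identity $\beta(X,JY)=J\beta(X,Y)$ coming from the Nagy splitting to conclude that $N$ is minimal in $M$ (so the mean curvature $H$ of $N$ in $\mathbb{Q}_{c}$ lies in $T^{\perp}_{f}M$), chooses $\eta$ parallel to $H$ normalized by $c+\|\eta\|^{2}=S/30$, and then reverses the constant-type computation of Lemma \ref{com lemma} to obtain $\beta(X,Y)=g(X,Y)\eta$, which is what puts $TN$ inside $D^{\eta}$ and makes $D^{\eta}$ non-zero. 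Your appeal to ``the totally umbilic condition $\alpha(X,Y)=\langle X,Y\rangle\eta$'' to see that $\eta$ lies in $T^{\perp}_{f}M$ is circular for the same reason: that condition is the conclusion, not a hypothesis. Likewise, the inference $S>0\Rightarrow\eta\neq0$ via $\|\eta\|^{2}=S/30$ is only available after $\eta$ has been constructed and the umbilicity verified, and the correct relation is $c+\|\eta\|^{2}=S/30$, so positivity of $S$ alone does not rule out $\eta=0$ when $c>0$.
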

\begin{proof}
By Theorem \ref{main theorem} the leaves of foliation $D$ are isometric to 6-dimensional factor in Nagy decomposition.

Conversely, let $N$ be the  6-dimensional nearly K\"ahler factor in Nagy decomposition with second fundamental form $\beta$ as a submanifold of $M$, so $\beta(X,JY)=J\beta(X,Y)$ for all $X,Y\in TN$. One can consider $N$ as a submanifold of $\mathbb{Q}^{2n+p}$  (by $N^{6}\longrightarrow M^{2n}\longrightarrow\mathbb{Q}^{2n+p}$). If $H$ is the mean curvature of $N$ in $\mathbb{Q}$ we choose $\eta$ parallel to  $H=\sum_{i=1}^{i=3}\beta(e_{i},e_{i})+\beta(Je_{i},Je_{i})$ such that $c+||\eta ||=\frac{S}{30}$, where $\{e_{i},Je_{i}\}$ is suitable Watanabe frame \cite{wat1} for $N$ and $S$ is the scalar curvature of $N$. Therefore $\eta\in T^{\perp}_{f}M$ and $T^{M}(X,Y)=T^{N}(X,Y)$ for all $X,Y\in TN$ where $T^{N}$ and $T^{M}$ are torsions of canonical Hermitian connection on $N$ and $M$ respectively.  By computation like as the proof of lemma \ref{com lemma} we have $\beta(X,Y)=g(X,Y)\eta$ for all $X,Y\in TN$. So $D^{\eta}$ is non-zero.
\end{proof}
\begin{proof}
(\noindent\textbf{corollary \ref{collorally}})
From the previous lemma,  if $M$ is 6-dimensional each leaf of complex and invariant totally umbilic foliation is an open subset of $M$ and because each leaf is locally homogeneous, $M$ is locally homogeneous and by Ambrose-Singer theorem if $M$ is simply connected then $M$ is homogeneous. Therefore there is no non-homogeneous complete simply connected and strict nearly K\"ahler submanifold of a space form, namely such a manifold must be one in the Butrulle classification of complete homogeneous 6-nearly K\"ahler manifolds.\end{proof}
\begin{proof}
(\noindent\textbf{theorem \ref{6-dim theorem}}) Since $M$ has irreducible
holonomy of  intrinsic Hermitian
connection, from the proof of Nagy decomposition each leaf of
complex and invariant totally umbilic foliation is complete . Therefore by
Corollary \ref{collorally} each leaf of this foliation
on $M$ is open, compact and thus is closed therefore as
$M$ is connected, the only leaf is itself $M$. But
$M$ is simply connected hence it must be one of the manifolds listed in
Butrulle theorem. Also from \cite{bel} we know that
$\mathbb{C}P^{3},\mathbb{F}^{3}$ with their standard nearly K\"ahler
structure coming from the twistor construction have reducible
holonomy of  intrinsic Hermitian connection and $S^{3}\times S^{3}$
is Riemannian reducible. Therefore the only manifold which is irreducible
with respect to the holonomy of the intrinsic Hermitian
connection must be $S^{6}$.
\begin{remark}
The only totally umbilic irreducible Euclidean submanifold is the sphere and the only sphere with strictly nearly K\"ahler structure is 6-sphere.
\end{remark}
If $\gamma$  is a normal vector field on $M$ which does not vanish everywhere,
for the complex and invariant totally umbilic foliation $D^{\gamma}$ we have $T_{x}M=D^{\gamma}_{x}=D^{\eta}_{x}$ and for $0\neq X\in T_{x}M$,  $\alpha(X,X)=<X,X>\eta_{x}=<X,X>\gamma_{x}$ thus $\eta_{x}=\gamma_{x}$ for all $x\in M$, therefore
\begin{align*}
\mathcal{N}^{1}_{x}:=spam\{\alpha(X,Y)|X,Y\in T_{x}M\}=<\eta_{x}>.
\end{align*}
In particular,
 $\mathcal{N}^{1}$ is an invariant distribution under parallel translation with respect to the induced connection on normal bundle $\nabla^{\perp}$, thus by reduction of codimension theorem \cite{Erb} there exist a (6+1)-dimensional totally geodesic submanifold $N$ of $\mathbb{R}^{n}$ such that $f$ is an isometric immersion of $M$ into $N$. This contradicts the fact that $f$ is full isometric immersion,  therefore $n$ must be 7. The rest part of theorem is already considered in \cite{Ber}.
 \end{proof}
\section{Classification of nearly K\"ahler hypersurfaces of a space form}
In this section using complex and invariant totally umbilic foliations we classify and describe nearly K\"ahler hypersurfaces of a space form based on their principal curvatures.
\paragraph*{}
In the previous section, we observed that there is no non-homogeneous complete nearly K\"ahler submanifold in a space form. If $dimM=6$ and $M$ is simply connected, there exist only four cases (up to homothety): $S^{6}$, $S^{3}\times S^{3}$, $\mathbb{C}P^{3}$ and $\mathbb{F}^{3}$.

Using Reyes Carrion structure theorem \cite{Rey} and Theorem1.1 of \cite{bel} which state that every complete Nearly K\"ahler, non-K\"ahler manifold, whose canonical connection has reduce holonomy is homothetic to $\mathbb{C}P^{3}$ or $\mathbb{F}^{3}$ with their standard nearly K\"ahler structure, we show that these type of manifolds
cannot be isometrically immersed hypersurfaces in a standard space form.
%
\begin{theorem}
If $f:M^{6}\longrightarrow\mathbb{Q}_{c}^{7}$ is an isometric immersion from a complete, simply connected and 6-dimensional nearly K\"ahler manifold into a standard space form with curvature $c$ then $M$ is a homogeneous Riemannian manifold and one of the following holds (up to homothty):\\
(i) $M$ is $S^{6}$, the space form is $\mathbb{R}^{7}$ and $f$ is equivalent to the standard embedding.\\
(ii) $M$ is $S^{3}\times S^{3}$, the space form is 7-sphere with $c=\sqrt{2}$ and $f$ is equivalent to the standard embedding $S^{3}\times S^{3}$ in $S^{7}_{\sqrt{2}}$.
\end{theorem}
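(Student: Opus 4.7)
The plan is to open by invoking Corollary \ref{collorally}, which reduces the problem to the case that $M$ is one of Butruille's four complete homogeneous $6$-dimensional nearly K\"ahler manifolds: $S^6$, $S^3\times S^3$, $\mathbb{CP}^3$, or $\mathbb{F}^3$ with its standard nearly K\"ahler structure. The case $M=S^6$ is then immediate from Theorem \ref{6-dim theorem}, since $S^6$ has irreducible canonical holonomy; this gives case (i), and up to homothety forces the ambient to be $\mathbb{R}^7$ with $f$ the standard embedding.

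The heart of the argument is to rule out $\mathbb{CP}^3$ and $\mathbb{F}^3$. By the Reyes Carri\'on structure theorem \cite{Rey} and Theorem~1.1 of \cite{bel}, these two spaces are precisely the complete non-K\"ahler nearly K\"ahler manifolds whose canonical Hermitian connection $\bar\nabla$ has reducible holonomy, and their twistor origin provides a $\bar\nabla$-parallel orthogonal splitting $TM=\mathcal{H}\oplus\mathcal{V}$ of ranks $4$ and $2$, invariant under both $J$ and the torsion $T$. I would take $\eta$ to be a local unit normal to $f$ and analyze the shape operator $A_\eta$ with respect to this splitting: combining the Codazzi equation with $\bar\nabla T=0$ and the nearly K\"ahler curvature identity of Lemma~\ref{formula} (the last displayed formula there), one derives a system of relations linking the principal-curvature values on $\mathcal{H}_x$ and on $\mathcal{V}_x$. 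The expected outcome is that $A_\eta$ must act as a single scalar on the whole of $T_xM$, that is, $M$ is totally umbilic as a hypersurface. Since totally umbilic hypersurfaces of space forms have constant sectional curvature, this contradicts the fact that the nearly K\"ahler metric on $\mathbb{CP}^3$ and $\mathbb{F}^3$ is Einstein but not of constant curvature. The main obstacle is precisely this mixing of horizontal and vertical shape-operator eigenvalues; one needs to exploit the constant type of $\nabla J$ from Proposition~\ref{6-dim} together with the fact that the torsion $T$ couples $\mathcal{H}$ and $\mathcal{V}$ in a very specific way.

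For $M=S^3\times S^3$, its Riemannian reducibility permits a Moore-type splitting \cite{Daj} of the immersion into $\mathbb{Q}_c^7$ as a product of isometric immersions of the $S^3$ factors into complementary totally geodesic submanifolds of the ambient space form. Since the codimension of $f$ is one, each factor immersion must be full in a totally geodesic $4$-dimensional round sphere of the ambient, which in turn forces the ambient to be a round $7$-sphere and $f$ to be the standard product embedding $S^3(1)\times S^3(1)\hookrightarrow S^7(\sqrt{2})\subset\mathbb{R}^4\oplus\mathbb{R}^4$. Matching the resulting product metric against the canonical nearly K\"ahler metric on $S^3\times S^3$ pins down $c=\sqrt{2}$ up to homothety, completing case (ii).
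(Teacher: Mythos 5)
Your skeleton (reduce to Butruille's list via Corollary \ref{collorally}, dispatch $S^6$ via Theorem \ref{6-dim theorem}, then eliminate $\mathbb{C}P^3$ and $\mathbb{F}^3$) matches the paper's, but the step that carries all the weight --- excluding the two twistor spaces --- is not actually carried out in your proposal. You describe a Codazzi $+$ $\bar{\nabla}T=0$ $+$ constant-type computation whose ``expected outcome'' is that $A_\eta$ is scalar, and then you name the mixing of horizontal and vertical eigenvalues as ``the main obstacle''; that obstacle \emph{is} the proof, and nothing in the sketch shows the relations actually close up to force umbilicity. Worse, if your intermediate claim did hold it would give constant sectional curvature and hence also exclude $S^3\times S^3$, contradicting case (ii) of the very statement being proved --- a sign that either total umbilicity is the wrong target or that your derivation must use the reducible-holonomy splitting in an essential way you have not exhibited. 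The paper argues differently at this point: from the $\bar{\nabla}$-parallel line bundle $\nu$ it obtains the twistor submersion $\pi\colon M\to N$ with totally geodesic fibres, shows via local homogeneity (Theorem \ref{main theorem}, Corollary \ref{collorally}) and Butruille's classification that $\pi^{-1}(U)$ is \emph{isometric} to a Riemannian product $U\times S^2$, and then contradicts the existence of a codimension-one isometric immersion of such a product into $\mathbb{Q}^7_c$ (no flat factor in the Euclidean case; self-duality/non-compactness of $N$ in the spherical case). That is an extrinsic product-immersion obstruction, not a shape-operator umbilicity computation.

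There is a second gap in your case (ii). Moore's splitting theorem concerns isometric immersions of Riemannian products into Euclidean space, and its extensions to spheres need additional hypotheses; more importantly, the homogeneous nearly K\"ahler metric on $S^3\times S^3$ is not the product of two round metrics, whereas the embedding $S^3(1)\times S^3(1)\hookrightarrow S^7_{\sqrt{2}}\subset\mathbb{R}^4\oplus\mathbb{R}^4$ induces precisely the product metric. So ``matching the resulting product metric against the canonical nearly K\"ahler metric'' is exactly the point that requires an argument and is asserted rather than proved. (The paper is itself terse here, referring this case back to Corollary \ref{collorally}, but your proposal adds an unverified identification on top of that.)
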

\begin{proof}: Let $M$ be a 6-dimensional nearly
K\"ahler manifold such that the holonomy group of the intrinsic
Hermitian connection $\bar{\nabla}$ is strictly contained in
$SU_3$. As every maximal subgroup of $SU_3$ is conjugate to $U_2$,
this is equivalent to the reducibility of M. We fix such a
$U_2$ containing the holonomy group, which defines a
$\bar{\nabla}$-parallel complex line sub-bundle of $TM$, henceforth
denoted by $\nu$. Since  $\nabla_{X}Y=\bar{\nabla}_{X}Y$ where $X$
and $Y$ are complex linearly dependent, we deduce that $\nu$ is
totally geodesic for $\nabla$ (and in particular integrable). The
orthogonal complement of $\nu$ in $TM$ will be denoted by
$\mathcal{H}$  and the restriction of $\mathcal{H}$ to every
integral manifold $S$ of $\nu$ is identified with the normal
bundle of $S$ \cite{bel}. Therefore there exists a Riemannian
manifold $N$ and a Riemannian submersion with totally geodesic
fibers $\pi:M\longrightarrow N$ such that the tangent space of the
fiber through any point $x\in M$ is $\nu_{x}$ \cite{bel}.
Moreover, $N$ is an oriented self-dual, Einstein with
positive scalar curvature and $\pi:M\longrightarrow N$ is the
$S^2$-bundle over $N$ whose fiber over $x\in N$ consists of all complex structures
on $T_{x}N$ that are compatible with the metric and
orientation\cite{Rey}. We show that $M$ could not be a
hypersurface in space form. For every $x\in N$ there exist
open simply connected neighborhood  $U$ of $x$ such that $\pi^{-1}(U)\approx U\times
S^{2}$. Indeed this is an isometry because by pulling back the metric of $U\times S^{2}$on $\pi^{-1}(U)$ we have a 6-dimensional nearly K\"ahler manifold which is locally homogeneous by  theorem \ref{main theorem} and
corollary \ref{collorally}. By Butrulle theorem \cite{But} this metric coincides with the metric inherited from $M$.
Now, if $M$ is a hypersurface in the standard space form, restricting
the isometric immersion from $M$ into $\mathbb{Q}^{7}$ to
$\pi^{-1}(U)$ and composing it with the isometry coming
from $\pi^{-1}(U)\approx U\times S^{2}$ there
exist an isometric immersion from $U\times S^{2}$ into
$\mathbb{Q}^{7}$. If the space form is the Euclidean space then we have a contradiction with the fact that $N$ has positive scalar curvature. If the space form is a sphere
then again we get a contradiction with the fact that $N$ is self-dual or non-compact.
Therefore complete simply connected nearly K\"ahler hypersurface of
a space form could not have reduce holonomy of intrinsic Hermitian
connection and holonomy of intrinsic Hermitian connection must be
$SU_3$, by the main theorem of \cite{bel} this hypersurface
can not be neither $\mathbb{C}P^{3}$ nor $\mathbb{F}^3$. By Theorem
\ref{main theorem} this hypersurface is homogeneous and must be one
of $S^{6}$ or $S^{3}\times S^3$ and the result follows from
corollary\ref{collorally}.
\end{proof}
\paragraph*{}
For a $2n-$dimensional nearly K\"ahler manifold $M$ with strictly positive Ricci curvature
if $M$ is a hyperspace in Euclidean space then
$Ric(X_{i},X_{i})=\lambda_{i}\sum_{j\neq i}\lambda_{j}>0$ where
$\lambda_{i}$'s are principal curvatures and $X_{i}$'s are the corresponding
eigenvectors. In particular $\lambda_{i}$'s are all positive or
all negative. Thus the Gaussian curvature of the hypersurface, $\lambda_1...\lambda_{2n}$, is
strictly positive. Hence if $M$ is complete, the Guassian sphere
map $M\longrightarrow S^{2n}$ is a diffeomorphism \cite{Kob}. In particular $M$ is a
strictly convex hypersurface.
\paragraph*{}
If  $M$ is connected and simply connected the leaves of complex and invariant intrinsic Hermitian totally umbilic foliation on $M$ are homogeneous 6-nearly K\"ahler manifolds by our main theorem. When $M$ is a hypersurface in the Euclidean space all the six number of principle curvatures coincide.\\
When $n=4$, the remaining principle curvatures  $\lambda_{7},\lambda_{8}$ are not equal. Otherwise, if since the distribution $x\longmapsto\{X\in T_{x}M|AX=\lambda_{7} X\}$ is invariant under almost complex structure and torsion of intrinsic Hermitian connection (because $X_{7}=JX_{8}$) each leaf of this foliation must have dimension six which is impossible. Therefore every complete simply connected strictly 8-nearly K\"ahler hypersurface in the Euclidean space must be isometric with a product of a 6-dimensional nearly K\"ahler hypersurface in the Euclidean space (listed above theorem) with a compact, oriented 2-dimensional surface with positive Guassain curvature and non-equal principal curvatures.\\
For $n=5$ by similar argument one cane show that four remaining principle curvatures are mutually distinct.

\bibliographystyle{acm}

\end{document}